\documentclass[a4paper,12pt]{article}
\usepackage[utf8]{inputenc}
\usepackage{amsmath,amsthm,amssymb,amsfonts}
\usepackage{bbm,bm}
\usepackage{latexsym}
\usepackage{mathrsfs}
\usepackage{threeparttable}
\usepackage{tabularx}
\usepackage{booktabs}
\usepackage{graphicx,subfigure}
\usepackage{xcolor}
\usepackage{indentfirst}
\usepackage{geometry}
\usepackage{caption}
\usepackage{float}
\usepackage[numbers,sort&compress]{natbib}
\usepackage[colorlinks,
            linkcolor=blue,
            citecolor=blue
            ]{hyperref}
\usepackage{epstopdf}

\numberwithin{equation}{section}
\def \[{\begin{equation}}
\def \]{\end{equation}}

\newtheorem{thm}{Theorem}[section]

\newtheorem{defi}[thm]{Definition}
\newtheorem{claim}{Claim}
\newtheorem{lem}[thm]{Lemma}
\newtheorem{cor}[thm]{Corollary}
\newtheorem{ex}[thm]{Example}

\newtheorem{conj}[thm]{Conjecture}
\newtheorem{rem}[thm]{Remark}

\begin{document}

\synctex=1

\setlength{\baselineskip}{20pt}
\begin{center}{\Large \bf The resonance graphs of  coronoid systems and nanotubes}\footnote{This work is supported by NSFC\,(Grant No. 12271229).}

\vspace{4mm}

{Lingmei Liang, Heping Zhang\footnote{The corresponding author.}
\renewcommand\thefootnote{}\footnote{E-mail addresses: lianglm2023@lzu.edu.cn (L.Liang), zhanghp@lzu.edu.cn (H.Zhang).}}

\vspace{2mm}

\footnotesize{ School of Mathematics and Statistics, Lanzhou University, Lanzhou, Gansu 730000, P. R. China}

\end{center}

\noindent {\bf Abstract}:
The resonance graph of a hexagonal system  is connected, which shows that a perfect matching can be transformed into any other perfect matchings by a series of flips along hexagons. However, the resonance graph of a coronoid system (with holes) is not necessarily connected.
Saldanha et al. (Discrete
Comput. Geom. 14 (1995) 207-233) used homology and cohomology theory to obtain three versions of criteria for two tilings of a quadriculated region in the plane to be in the same connected component of the flip graph.  Inspiblack by the combinatorial version, in this paper we use a purely graph-theoretical approach to give a criterion in terms of simple invariant\textcolor{black}{---flow} across cuts between holes/exterior face for two perfect matchings of a coronoid system $G$ to be in the same connected component of its resonance graph. As a corollary we obtain a criterion for  the resonance graph of a coronoid system to be connected. We also discuss whether such  \textcolor{black}{criteria} are applicable to  nanotubes, and construct  a nanotube whose resonance graph is connected, which disproves a conjecture  proposed by Tratnik et al. (MATCH Commun. Math. Comput. Chem.  74 (2015) 175-186).

\vspace{2mm}
\noindent{\it Keywords}: Hexagonal system; Coronoid system; Nanotube; Perfect matching; Resonance graph
\vspace{2mm}


{\setcounter{section}{0}

\section{ Introduction}\setcounter{equation}{0}

Resonance graphs (or Z-transformation graph) originates from Herndon's resonance theory in 1973 \cite{Hern73} in chemistry and reflect the interactions between Kekul\'e structures of benzenoid hydrocarbons. This concept \textcolor{black}{ was independently introduced by chemists W. Gr$\rm{\ddot{u}}$ndler in \cite{Grun82} and S. El-Basil in \cite{ELBA93,ELBA932}, and by mathematicians Zhang et al. in \cite{ZGC88,ZGC88b}.} The degree sum of a resonance graph can be used to estimate the resonance energy of a benzenoid hydrocarbon \cite{ZLH93}.

A \emph{hexagonal system} (or \emph{benzenoid system})
is a finite 2-connected plane graph in which each interior face is a regular hexagon with  side length one. A \emph{coronoid system}  is a connected subgraph of a hexagonal system such that every edge belongs to a hexagon  and it contains a non-hexagon interior face (called hole). 
A hexagonal or coronoid system with a perfect matching (or Kekul\'e structures) can be viewed as the carbon-skeleton of a benzenoid or coronoid hydrocarbon \cite{CBCV91,CBCZ94}.

The resonance graph (or Z-transformation graph) of a hexagonal system as the graph on the set of perfect matchings: two vertices are adjacent
provided that their corresponding perfect matchings differ only in  one
hexagon (say a Z-transformation or flip along a hexagon). Afterwards, this concept was naturally extended to \textcolor{black}{ polyomino graphs} \cite{Zhang96}, fullerenes and nanotubes \cite{TZra16,TZra15,TZ16}, plane bipartite graphs \cite{ZZ2000, Four03}, plane graphs \cite{LWLZ26}
and graphs on surfaces \cite{TYra23} whenever flipped  hexagons are replaced with even interior face cycles. For an early survey on this topic, see \cite{Zhang06}.

For a hexagonal system,  it was proved that its resonance graph has the connectivity  equal to the minimum degree \cite{ZGC88b}.
For a polyomino graph, the same result holds \cite{Zhang96}  with two  exceptions.
For a general plane bipartite graph $G$ with a perfect matching, Zhang and Zhang \cite{ZZ2000} showed that the resonance graph $R(G)$ is connected if and only if $G$ is weakly elementary, i.e. every interior face of every elementary component of $G$ remains an interior face of the original $G$. In this case the oriented resonance graph implies a distributive lattice structure on the set of perfect matchings \cite{LZhang03,Zhang10} and $R(G)$ is a median graph \cite{ZLSh08, KZBL02}, which can be  embedded isometrically in a hypercube.
\textcolor{black}{\v{Z}igert Pleter\v{s}ek et al. \cite{ZBI12} showed that Lucas cubes are the nontrivial component of the resonance graphs of cyclic polyphenanthrenes.} Recently \v{Z}igert Pleter\v{s}ek \cite{Zige18} discoveblack that the resonance graph of a kinky benzenoid graph is a daisy cube \cite{KMla19}. Brezovnik et al. \cite{BCTr25} characterized all plane bipartite graphs whose resonance graphs are daisy cubes.

Liu et al. \cite{LWLZ26} presented a criterion for the resonance graph of a plane non-bipartite graph to be  connected. Under the condition that only squares are allowed to flip they further  showed  that the  resonance  graph of   cylindrical grid $P_{2m}\square C_{2n+1}$ is  connected, but showed \cite{LZZI25} that the resonance graph
of  toroidal grid $C_{2m}\square C_{2n+1}$ is not connected and consists of two isomorphic components, which imply that they  have the continuous forcing spectra (the forcing numbers of all perfect matchings form an integer interval).

For a coronoid system $G$ with a perfect matching $M$, an $M$-alternating hexagon can be flipped to obtain another perfect matching. Such a hexagon is an aromatic sextet (also resonant), which is more important than longer resonant cycle in chemical stability. So the holes and outer face are not allowed to flip. So we give the following definition.
\begin{defi}
{\rm  The resonance graph (also called Z-transformation graph)
 of a hexagonal or coronoid system $G$, denoted by $R_6(G)$, is a simple graph in which the vertices are the perfect matchings of $G$ and
 two perfect matchings $M_1$ and $M_2$ are joined by an edge provided
 their symmetric difference consists exactly of six edges of one hexagon of $G$ (say $M_1$ and $M_2$ are transformed into each other by a flip along the hexagon).}
\end{defi}

 The following  example shows that the resonance graph of a coronoid system $G$ is not necessarily connected. It is natural to ask whether any given two perfect matchings of a coronoid system $G$ lie in the same component of it resonance graph $R_6(G)$.
\begin{figure}
\centering
\includegraphics[scale=0.8]{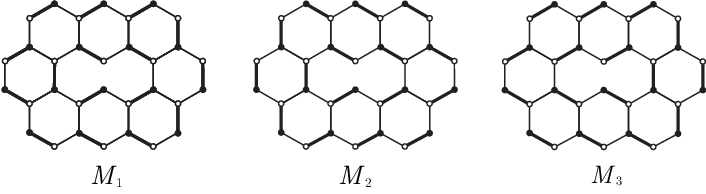}
\caption{\label{Figure_1} Three perfect matchings (bold edges) of a coronoid system $G$.}
\end{figure}

\begin{ex}
{\rm  Let $G$ be coronoid system with one hole shown in Fig.  \ref{Figure_1}. A routine computation shows that $G$ has exactly 40 perfect matchings and   the resonance graph $R_6(G)$ consists of one large component containing $M_{1}$ and two singleton components $M_{2}$ and $M_{3}$.}
\end{ex}

For a quadriculated region $P$ in the plane consisting of squares, a \emph{domino} is the union of two adjacent
squares and a (domino) \emph{tiling} is a collection of dominoes with disjoint interior whose
union is the region. The \emph{flip graph} of $P$ is a graph on the set of all tilings of $P$ such  that two tilings are adjacent if we change one to another by a flip (a $90^{\circ}$
rotation of a pair of side-by-side dominoes), which corresponds to the resonance graph of the inner dual of $P$ (only unit squares are allowed to flip).
Saldanha et al. \cite{STCR95} used homology and cohomology theory to obtain combinatorial, homological and height section  versions of criteria for two tilings of a quadriculated region to be in the same connected component of the flip graph. They showed such three versions are equivalent. For combinatorial version, Saldanha et al. used   a quite  simple invariant of a domino tiling - {\sl flow} across a cut between holes derived by 1-dimensional cohomology class corresponding to the cut.

Motivated by the above combinatorial version, in this paper we define  flows across $n$ cuts between holes (or exterior face) of a coronoid system $G$ with $n$ holes which jointly connect all holes and exterior face and do not disconnect $G$, and  show that two perfect matchings of a coronoid system $G$ to belong to the same
connected component of the resonance graph of $G$ if and only if they have the same  flows  across  all such cuts (see Theorem \ref {main results}). In the next section, we present some preliminary concepts and results.  In Section 3, we use a  graph-theoretical method to  prove the main result. In Section 4, we give two  applications.  Firstly we show that  the resonance graph of a coronoid system is connected if and only if each nice cycle bounds a hexagonal system. Secondly, we point out that this criterion holds for nanotubes, and the flow-criterion is still effect for elementary nanotubes.   Finally, we give a counterexample to the conjecture proposed by Tratnik et al. \cite{TZra15} that states that the resonance graph of a nanotube is not connected.

\section{ Preliminaries}

In this section, some definitions and useful results are given first.  For a graph $G$, let $V(G)$ and $E(G)$ denote its vertex set and edge set, respectively. For a bipartite graph $G$, its vertices are always coloblack white and black such that any pair of adjacent vertices receive different colors.  \textcolor{black}{Coronoid systems and nanotubes are bipartite, respectively, see \cite{CBCZ94} and \cite{SHZ96}.}

\begin{color}{black}
By the definition we know that a coronoid system $G$ is a 2-connected plane graph,  the boundaries of all nonhexagonal faces are disjoint cycles, and the the boundaries of all holes lie in the interior of the boundary of $G$.  The  following lemma shows the converse also holds.
\begin{lem}\label{equivalent}
 Let $C_0, C_1, \dots, C_n$ ($n \geq 1$) be disjoint cycles on a hexagonal lattice such that the interior of each $C_i$, where $1\leq i\leq n$, contains at  least two hexagons and is entirely contained in the interior of $C_0$. Then the graph  $G$  obtained from $I[C_0]$ by deleting the interiors of $C_1,C_2,\dots,C_n$ is a coronoid system.
\end{lem}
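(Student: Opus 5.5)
We must check that $G$ meets the definition of a coronoid system: it is a connected subgraph of a hexagonal system, every edge of $G$ lies on a hexagon of $G$, and $G$ has at least one non-hexagonal interior face. We take $H=I[C_0]$, the subgraph of the hexagonal lattice consisting of $C_0$ together with its interior. Since $C_0$ is a cycle on the lattice, $H$ is a finite 2-connected plane graph whose interior faces are hexagons, so $H$ is a hexagonal system. The graph $G$ is then $H$ with the vertices and edges lying strictly inside each $C_i$ removed; the cycles $C_i$ themselves are kept. Thus $G$ is a subgraph of the hexagonal system $H$.

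\emph{Faces.} The faces of $G$ are the hexagons of $H$ lying outside every $C_i$, the $n$ new faces bounded by $C_1,\dots,C_n$, and the exterior face bounded by $C_0$. Each $C_i$ with $i\ge 1$ encloses at least two hexagons, and a single hexagon has length $6$, so $C_i$ has length greater than $6$. The face it bounds is therefore a non-hexagonal interior face, i.e.\ a hole, and $n\ge 1$ such holes exist.

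\emph{Connectivity.} Take two vertices of $G$ and a path between them in $H$. Whenever the path enters the interior of some $C_i$, it must cross $C_i$ at a vertex both on entering and on leaving. Replace that inner portion by one of the two arcs of $C_i$ joining the entry and exit vertices. The cycles are disjoint and each interior is nested inside $C_0$, so these replacements do not interfere with one another. Repeating for every such excursion gives a path in $G$, so $G$ is connected. The same rerouting applied to two internally disjoint paths, after a small adjustment, would even give 2-connectivity, but connectivity is all the definition requires.

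\emph{Every edge lies on a hexagon of $G$.} This is the main obstacle. Let $e$ be an edge of $G$.
\begin{itemize}
\item If $e$ is not on any $C_i$ with $i\ge1$, then both faces of $H$ at $e$ survive in $G$. At least one of them is interior, hence a hexagon of $G$.
\item If $e\in C_i$ with $i\ge 1$, then one side of $e$ is the hole. The other side is a hexagon $h$ of $H$ that lies in the interior of $C_0$, because $C_i$ is inside $C_0$ and disjoint from it.
\end{itemize}
In the second case we must show that $h$ is not destroyed. Since $h$ lies outside $C_i$, it could fail to be a face of $G$ only if $h$ met the interior of some other $C_j$. But the cycles are disjoint and the interiors are either disjoint or nested; nesting of two disjoint cycles inside one another's interior is excluded, because the region between them would then be removed. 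Hence no vertex of $h$ is strictly inside any $C_j$, so all six edges of $h$ are in $G$ and $h$ is a face of $G$.

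The delicate point is the hexagon $h$ adjacent to $C_i$, which could have a vertex on another cycle $C_j$. Such an $h$ still remains a hexagonal face of $G$, since vertices on $C_j$ are kept. So the only thing to rule out is a vertex of $h$ lying strictly inside some $C_j$, and the disjointness argument above does this. This completes the verification that $G$ is a coronoid system.
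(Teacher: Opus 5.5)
Your proof is correct, and its core step, that every edge of $G$ lies on a hexagon of $G$, is the same case analysis the paper uses. The paper treats edges on $C_0$, edges on some $C_i$ with $i\ge 1$, and all other edges separately; you merge the first and last cases.

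The real difference is connectivity. The paper observes that every face of $G$ is bounded by a cycle (a hexagon or some $C_i$) and invokes the standard criterion that such a plane graph is 2-connected. You instead reroute paths of $I[C_0]$ along arcs of the $C_i$. Your route is elementary and gives exactly what the stated definition requires, namely connectedness. The paper's route gives 2-connectedness, which it relies on in the surrounding discussion. Your own description of the faces of $G$ already yields 2-connectedness by that same criterion, so the rerouting is optional. You also verify explicitly that a hole exists, which the paper leaves implicit.

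One inaccuracy: the hypotheses do not exclude nesting ($C_j$ inside $C_i$), contrary to what you assert. Nothing breaks, however.
\begin{itemize}
\item If $C_i$ lies inside $C_j$, then $e$ is itself deleted.
\item If $C_j$ lies inside $C_i$, then $h$, being outside $C_i$, is outside $C_j$.
\end{itemize}
Only your count of $n$ holes changes, and the paper makes the same tacit non-nesting assumption.

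In the non-nested case, argue at the level of faces rather than vertices. A lattice hexagon lies entirely inside or entirely outside any lattice cycle. So $h\subset\mathrm{int}(C_j)$ would force $e$, and hence all of $C_i$, into $\mathrm{int}(C_j)$. Your step ``no vertex of $h$ is strictly inside $C_j$, so all six edges are in $G$'' does not by itself handle an interior chord of $C_j$, which has no vertex strictly inside $C_j$.
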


\begin{proof} $G$ is a plane graph and each face is bounded by a hexagon or cycle $C_i$, $0\leq i\leq n$. Then $G$ is 2-connected. For any edge $e$ of $G$, if $e$ lies in  $C_0$, let $h$ denote a hexagon of $I[C_0]$ such that $e$ is an edge of $h$. Since $C_0$ is disjoint with each $C_i$, $1\leq i\leq n$,  $h$ does not lie in the interior of each $C_i$,  so $h$ is a hexagon of $G$; if $e$ lies in  $C_i$, $1\leq i\leq n$, let $h$ denote a hexagon of $O[C_i]$ such that $e$ is an edge of $h$. Since $C_i$ is disjoint with the other $C_j$, $1\leq j\not=i$,  $h$ does not lie in the interior of each such $C_j$,  so $h$ is a hexagon of $G$. Otherwise, $e$ does not lie in any $C_i$. In this case both hexagons with edge $e$ in $I[C_0]$ are hexagons of $G$. Hence $G$ is a coronoid system.
\end{proof}

\end{color}

\textcolor{black}{A \emph{matching} $M$ of a graph $G$ is an edge subset such that no two edges of $M$
have a common end-vertex.
A \emph{perfect matching} of $G$ is a matching that covers
all the vertices of $G$.
A graph $G$ is \emph{matchable} if it has a perfect matching \cite{LMuu24}.}  An edge of  a matchable bipartite  graph $G$ is said to be \emph{forbidden} (or fixed single) if it does not lie in any perfect matching of $G$.  The \emph{elementary components}
of $G$ mean components of the subgraph obtained from $G$ by the removal of all forbidden edges. A connected bipartite graph $G$ is  called \emph{elementary} if it admits a perfect matching and has no  forbidden edges.

For a perfect matching $M$ of a plane bipartite graph $G$, a cycle $C$ (or path $P$) is called an \emph{$M$-alternating cycle} (or path) if the edges of $C$ (or $P$) appear alternately in $M$ and off $M$.
An $M$-alternating cycle $C$ of  $G$ is said to be \emph{proper (improper)} if every edge of $C$ belonging to $M$ goes from the white (black) end-vertex to the black (white)
end-vertex by the clockwise orientation of $C$. \textcolor{black}{A face of $G$ is said to be \emph{proper (improper) $M$-alternating face} if its boundary is a proper (improper) $M$-alternating cycle.} 
 \textcolor{black}{A cycle of $G$ is \emph{nice} if $G-V(C)$ has a perfect matching. Equivalently, a cycle  of $G$ is nice (or {\it resonant}) if $G$ has a perfect matching $M$ such that $C$ is  $M$-alternating.  A face of $G$ is said to be \emph{resonant} if its boundary is a resonant cycle.}

 For sets $A$ and $B$, the \emph{symmetric difference} of $A$ and $B$ is $A\triangle B=(A\cup B)\setminus(A\cap B)$.
For an $M$-alternating cycle $C$ of $G$, $E(C)\triangle M$ is also a perfect matching of $G$. For  distinct  perfect matchings $M$ and $M'$ of  $G$,  $M\triangle M'$ forms disjoint
 $(M,M')$-alternating cycles of $G$.

We also present some basic properties of hexagonal systems,
plane elementary bipartite graphs and that make use to our proofs as follows:
\begin{lem}[\cite{ZGC88}]\label{hexagonal system}
Let $H$ be a matchable hexagonal system. Then the resonance graph of $H$ is a
connected bipartite graph.
\end{lem}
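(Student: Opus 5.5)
Bipartiteness is the easy half, so I will dispose of it first. Fix the white/black colouring of $H$. For a perfect matching $M$ and a hexagon $h$ that is $M$-alternating, the flip $M\mapsto M\triangle E(h)$ turns a proper $M$-alternating hexagon into an improper one and conversely, because the matched edges of $h$ swap between the two classes ``white-to-black clockwise'' and ``black-to-white clockwise''. I will use a parity invariant to get a bipartition of $R_6(H)$. Fix a reference perfect matching $M_0$. For any $M$, the set $M\triangle M_0$ is a disjoint union of alternating cycles. Each flip changes $M\triangle M_0$ by exactly the six edges of one hexagon, so the number $|M\cap M_0|$ changes by an odd amount ($\pm 1$ or $\pm 3$). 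Hence the parity of $|M\cap M_0|$ (equivalently of $|M\setminus M_0|$) alternates along every edge of $R_6(H)$, which gives a bipartition. One could instead use the classical height/level function, which changes by exactly $\pm1$ per flip, but the parity argument is enough.

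For connectivity I will argue by induction on the number of hexagons enclosed by $M\triangle M'$. Take distinct perfect matchings $M,M'$ of $H$. Then $M\triangle M'$ is a union of disjoint $(M,M')$-alternating cycles, and I will choose a cycle $C$ among them that is innermost, so that no other cycle of $M\triangle M'$ lies in its interior. Since $H$ is a hexagonal system with no holes, the interior $I[C]$ is itself a hexagonal system, and $M$ restricted to $I[C]$ is a perfect matching of it because $C$ is $M$-alternating and no $M$-edge crosses $C$. The key step is the classical lemma of Zhang--Guo--Chen type: if $C$ is an $M$-alternating cycle in a plane bipartite graph whose interior faces are all hexagons, then there is a hexagon in $I[C]$ that is $M$-alternating with the same orientation (proper or improper) as $C$. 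Applying it, I flip that hexagon; this either shortens $C$ or reduces the area enclosed between the current matching and $M'$ on that region. After finitely many flips the cycle $C$ disappears from the symmetric difference, and repeating over all cycles transforms $M$ into $M'$.

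The main obstacle is that lemma, i.e.\ finding an alternating face inside an alternating cycle. I would prove it by considering the orientation on the edges of $I[C]$ in which $M$-edges point white$\to$black and the other edges black$\to$white. An $M$-alternating boundary cycle of a face is exactly a directed face cycle. Using Euler's formula inside $C$, or a minimal-counterexample argument in which one takes a directed cycle in $I[C]$ enclosing the fewest faces and shows it must be a face boundary, one obtains a directed face cycle of the right orientation. It must also be checked that no alternating cycle strictly inside passes around a ``hole''; this is exactly where the hypothesis that $H$ has no holes is used. I expect to cite this fact from \cite{ZGC88,ZZ2000} rather than reprove it.
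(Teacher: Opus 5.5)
Both halves of the proposal have a genuine gap. The paper does not prove this lemma itself; it quotes it from \cite{ZGC88}. The device that repairs your connectivity argument is the one the paper uses in Lemma \ref{path}.

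\textbf{Bipartiteness.} The parity of $|M\cap M_0|$ does not alternate along edges of $R_6(H)$. If $h$ is $M$-alternating and $M_1=M\triangle E(h)$, then $|M_1\cap M_0|-|M\cap M_0|=|E(h)\cap(M\triangle M_0)|-3$. This is $0$ when $M_0$ has no edge on $h$, and it can also be $\pm 2$. Concretely, naphthalene has three perfect matchings forming a path $K_1K_2K_3$ in $R_6$. With $M_0=K_1$ the values of $|M\cap M_0|$ are $5,2,0$, so the edge $K_2K_3$ joins two matchings of the same parity. You need a quantity that changes by exactly $\pm1$ under every flip. The height function you dismiss does this. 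An elementary alternative is to draw $H$ with vertical edges. Then all hexagons carry the same colour pattern, and an $M$-alternating hexagon has exactly one of its two vertical sides in $M$. A proper-to-improper flip always moves this vertical $M$-edge across the hexagon in the same horizontal direction. So $\Phi(M)$, the sum of the abscissae of the vertical edges of $M$ in units of the hexagon width, changes by $+1$ under proper-to-improper flips and by $-1$ under the reverse (for a suitable orientation of the axis). Hence every closed walk in $R_6(H)$ has even length.

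\textbf{Connectivity.} Your inductive step does not decrease any measure you have defined, and the procedure can cycle. Suppose the hexagon $h\subseteq I[C]$ supplied by the alternating-face lemma shares no edge with $C$. Then $M_1\triangle M'=(M\triangle M')\cup E(h)$, and several things go wrong:
\begin{enumerate}
\item $C$ is not shortened;
\item a new cycle $h$ of opposite orientation appears inside $C$, so the number of enclosed hexagons goes up;
\item your rule ``take an innermost cycle'' now selects $h$, whose interior contains only $h$, so the next step flips $h$ back.
\end{enumerate}
This really occurs in coronene. Let $D$ be its perimeter (an $18$-cycle) and $c$ its central hexagon. Let $M$ consist of alternate edges of $D$ and of $c$, with the phase on $c$ chosen so that no outer hexagon is $M$-alternating. (Of the two phases, one makes three outer hexagons alternating and the other makes none.) Put $M'=M\triangle E(D)$. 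Then $c$ is the only $M$-alternating hexagon, so the first flip must be at $c$, after which your rule returns to $M$.

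Ironically, the obstruction is the ``hole'' you dismiss: after a flip strictly inside $C$, the region still to be swept is an annulus. The lemma only provides a correctly oriented hexagon somewhere in $I[C]$, not in that annulus. The missing idea is the multiply connected statement of Lemma \ref{path}. Namely, if $C_0$ is proper and $C_1,\dots,C_r$ inside it are improper $M$-alternating, then $M$ and $M\triangle\bigcup_i E(C_i)$ are joined by flips of hexagons in the region between them. One proves this by induction on the number of hexagons in that region. The flipped hexagon is found by applying the face lemma to $I[C_0]$ with the interiors of the $C_i$ deleted. This graph is elementary by Lemmas \ref{nice cycle}, \ref{elementary} and \ref{I[C] elementary}, and in it the $C_i$ are faces of the wrong orientation, so they cannot be chosen.

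Alternatively, use $\Phi$ from the bipartiteness part. Flip proper alternating hexagons to improper as long as possible. $\Phi$ moves monotonically, so this terminates at a perfect matching with no proper alternating hexagon. Such a matching is unique by the very lemma you cite: any cycle of $M_1\triangle M_2$ is proper for $M_1$ or for $M_2$, and then encloses a proper alternating hexagon of that matching. Every perfect matching therefore reaches the same one, which gives connectivity.
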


\begin{lem}[\cite{ZZ2000}]\label{nice cycle}
Let $G$ be a plane bipartite graph
with more than two vertices. Then
each face of $G$ is resonant if
and only if $G$ is elementary.
\end{lem}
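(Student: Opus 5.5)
The statement to address is Lemma~\ref{nice cycle}: every face of a plane bipartite graph $G$ with more than two vertices is resonant if and only if $G$ is elementary. The plan is to prove the two directions separately. For each, we combine the ear-decomposition characterization of elementary bipartite graphs with the planar structure of faces.

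\emph{Elementary $\Rightarrow$ every face resonant.} We use the fact that an elementary bipartite graph with more than two vertices is 2-connected, so every face is bounded by a cycle. We also use that such a graph has a bipartite ear decomposition $G_0=e,\,G_1=C,\dots,G_k=G$, where each ear is an odd-length path whose ends have different colours. The key refinement is to choose the decomposition so that a prescribed face $f$ is the starting cycle. Concretely, we take $C=\partial f$ and build outward: each ear added lies in the exterior of the current subgraph relative to $f$. Every $G_i$ is then elementary and $f$ remains a face.

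Once $C=\partial f$ has been placed first, resonance is immediate. Each ear $P$ added to an elementary $G_{i-1}$ has an odd number of edges and an even number of internal vertices. A perfect matching of $G_{i-1}$ extends to $G_i$ by matching the internal vertices of $P$ along $P$, and $C$ stays alternating. So a perfect matching of $G$ exists in which $C$ is alternating, and $f$ is resonant.

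\emph{Every face resonant $\Rightarrow$ elementary.} We argue by contraposition. Resonance of any face gives a perfect matching $M$. Suppose $G$ is not elementary. If $G$ is disconnected or has a forbidden edge $e=uv$, we aim to contradict the resonance of a face containing $e$: an $M$-alternating face boundary through $e$ would put $e$ in the perfect matching $M\triangle E(\partial f)$ or in $M$ itself. The difficulty is showing that such a face has $e$ on its boundary as an alternating cycle edge. We would use the Dulmage--Mendelsohn type structure of the elementary components instead. Forbidden edges join distinct elementary components, and for a plane graph some face must meet two elementary components along its boundary. A resonant face boundary is a nice cycle, and its edges all lie in perfect matchings, since $M$ and $M\triangle E(\partial f)$ between them cover every edge of the cycle. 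Hence no face boundary contains a forbidden edge. But if forbidden edges exist, each one lies on the boundary of some face, because every edge of a plane graph lies on at least one face boundary. This is a contradiction. Connectivity of $G$ is needed for elementarity as defined; in the disconnected case we would note that the paper's setting assumes $G$ connected, or else observe that a face meeting two components has a non-cycle boundary and cannot be resonant.

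\emph{Expected main obstacle.} The hardest step is the first direction's construction of an ear decomposition starting at an arbitrary prescribed face boundary while preserving planarity and elementarity at each stage. The plan is to prove by induction that for any elementary plane subgraph $H\subsetneq G$ containing $\partial f$ as a face, some ear of $G$ relative to $H$ exists. To find it, we take a component of $G-V(H)$ or an edge outside $H$ and use the fact that in an elementary graph every edge lies in a nice cycle through it. This yields an alternating path leaving and returning to $H$ with endpoints of opposite colours.
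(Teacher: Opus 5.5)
\textbf{Verdict: the proposal has a genuine gap, in the direction ``elementary $\Rightarrow$ every face resonant''.}

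Your direction ``every face resonant $\Rightarrow$ elementary'' is fine once you drop the Dulmage--Mendelsohn detour. Face boundaries being cycles gives connectivity. Each edge lies on some face boundary $C$ that is $M$-alternating, so the edge lies in $M$ or in $M\triangle E(C)$.

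The problem is the other direction. A bipartite ear decomposition starting at $C=\partial f$ forces $G-V(C)$ to have a perfect matching. So building one already contains the claim that $C$ is nice, and your ear-finding step does not supply it. An edge leaving $H$ that lies in a nice cycle gives an outside segment with ends of opposite colours only if that cycle is alternating for a perfect matching of $G$ restricting to a perfect matching of $H$, i.e.\ only if $H$ is already nice. For $H=\partial f$ this is circular.

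Your inductive statement (every elementary plane $H\subsetneq G$ having $\partial f$ as a face boundary admits an odd ear) is in fact false. Take coronene, with central hexagon $c_1\cdots c_6$, spokes $c_ip_i$, perimeter $18$-cycle $D$, and $f$ the exterior face. Let $H=D\cup p_1c_1c_2p_2\cup p_4c_4c_5p_5$.
\begin{itemize}
\item $H$ is elementary: it is a cycle plus two odd ears whose ends have opposite colours.
\item $D$ bounds the exterior face of $H$, and $H\neq G$.
\item The only vertices outside $H$ are $c_3$ and $c_6$. They are nonadjacent, all their neighbours lie in $H$, and no edge of $G-E(H)$ joins two vertices of $H$.
\end{itemize}
Hence every path leaving and re-entering $H$ has length $2$, and no odd ear exists. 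For instance, the nice central hexagon exits $H$ only along $c_2c_3c_4$ and $c_5c_6c_1$.

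The missing idea is that you need not start at $\partial f$. Prove instead that all faces are resonant simultaneously, by induction along an \emph{arbitrary} bipartite ear decomposition $G_1\subset\cdots\subset G_k=G$ with $G_1$ an even cycle. Suppose every face boundary of $G_{i-1}$ is nice in $G_{i-1}$. Let the new ear $P$, with ends $x,y$, lie in the face $f'$ of $G_{i-1}$ whose boundary cycle $Q_1\cup Q_2$ is $M$-alternating, where $Q_1,Q_2$ are two odd $x$--$y$ paths. Then the end edges of $Q_1$ are both in $M$ or both outside $M$; say both are in $M$. Let $M_P$ be the perfect matching of the interior vertices of $P$ along $P$.
\begin{itemize}
\item $M\cup M_P$ makes $P\cup Q_1$ alternating.
\item $(M\triangle E(Q_1\cup Q_2))\cup M_P$ makes $P\cup Q_2$ alternating.
\item Every other face of $G_i$ is a face of $G_{i-1}$, and its alternating boundary stays alternating after adding $M_P$.
\end{itemize}
The paper gives no proof of this lemma and only cites Zhang--Zhang, so this induction is what should replace your construction.
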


In \cite{ZZ1992}, Zhang and Zheng  gave the following criterion for every hexagon of a coronoid system being resonant. For other mathematical properties on perfect matchings of coronoid systems, see monograph \cite{CBCZ94}.
\begin{lem}[\cite{ZZ1992}]\label{elementary}
Every hexagon of a hexagonal or coronoid system is resonant if and only if each non-hexagonal face is resonant.
\end{lem}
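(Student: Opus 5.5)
The plan is to prove the two directions separately, working inside $G$ with Lemma~\ref{nice cycle} as the main tool. Note first that for a hexagonal or coronoid system $G$, the faces are exactly the hexagons, the holes and the exterior face. So, by Lemma~\ref{nice cycle}, the statement is equivalent to the following: \emph{every hexagon is resonant iff every face (including holes and the outer face) is resonant.} The direction ``every face resonant $\Rightarrow$ every hexagon resonant'' is then trivial. The real content is the converse: if every hexagon of $G$ is resonant, then each non-hexagonal face (hole or exterior) is resonant. Here ``resonant'' means alternating with respect to some perfect matching of $G$, and for hexagonal systems the exterior face is included as well.

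For the nontrivial direction I would argue that $G$ is elementary, and then apply Lemma~\ref{nice cycle} to conclude that every face, in particular every hole, is resonant. Suppose every hexagon is resonant. Then every edge lies on a hexagon $h$ which is $M_h$-alternating for some perfect matching $M_h$. Both perfect matchings $M_h$ and $M_h\triangle E(h)$ exist, and each edge of $h$ lies in one of them. Hence every edge of $G$ lies in some perfect matching, so $G$ has no forbidden edges. Elementarity also requires connectivity, which holds since a coronoid system is connected (indeed 2-connected, as noted before Lemma~\ref{equivalent}). Therefore $G$ is elementary, and Lemma~\ref{nice cycle} gives that every face of $G$, including every hole $C_i$ and the outer boundary $C_0$, is resonant.

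For the other direction, assume every non-hexagonal face is resonant. To use Lemma~\ref{nice cycle} I would need all faces resonant, and hexagons are precisely what we want to show, so this direction is not circular-free via that lemma alone. Instead I would again aim to show $G$ is elementary. Take an edge $e$ and suppose it is forbidden. In a plane bipartite matchable graph, the boundary between elementary components and the set of forbidden edges forms a structure in which some face boundary contains forbidden edges while the face is not an interior face of an elementary component. The classical argument of Zhang and Zheng, which I would reproduce, is this: take an elementary component $K$ and consider a face of $K$ that is not a face of $G$. Its boundary is a nice cycle of $G$ whose interior contains forbidden edges. Analyzing the hexagons incident to the forbidden edges along that boundary (using the hexagonal lattice geometry and the bipartite colouring) forces a hole or the outer face to contain a forbidden edge. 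That contradicts its resonance, because a resonant face's edges all lie in perfect matchings.

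I expect this last step to be the main obstacle, namely showing that a forbidden edge must propagate to a non-hexagonal face. I would first try the standard ``fixed bond'' argument: forbidden edges together with the edges forced into every matching form a cut separating $G$ into pieces with a colour imbalance. Following a zigzag or straight line of such edges through hexagons, I would show it must terminate on the boundary of a hole or of the exterior face.
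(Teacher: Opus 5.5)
The paper gives no proof of this lemma; it is quoted from Zhang and Zheng, so your proposal has to stand on its own. Your proof that every hexagon resonant implies every non-hexagonal face resonant is correct and complete: each edge lies on a resonant hexagon and hence in some perfect matching, so $G$ is elementary and Lemma~\ref{nice cycle} applies. But that is the easy half. Your opening reformulation (``every hexagon is resonant iff every face is resonant'') encodes only this implication, so it is not equivalent to the lemma. For the real content (non-hexagonal faces resonant implies every hexagon resonant) you describe an argument rather than give one.

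\textbf{The gap.} That missing direction is a genuine gap, and the mechanism you describe is exactly what can fail. Suppose $e$ is forbidden. Hall's theorem gives a set $S$ of white vertices with $|N(S)|=|S|$ such that $e$ joins $T=S\cup N(S)$ to its complement. Every perfect matching matches $T$ into itself, so all edges between $T$ and $V(G)\setminus T$ are forbidden, and each has its black end in $T$. If one of them lay on the boundary of a hole or of the exterior face, you would be done. But nothing in your sketch stops these edges from closing up into rings around holes that meet no non-hexagonal face.

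This really happens in the nanotube of Fig.~\ref{Figure_12}. Drawn in the plane, it is a plane bipartite graph whose faces other than the two ends are all $6$-cycles. Its vertical edges are forbidden and form such rings. Its elementary components are the cycles left after deleting them, so both ends are resonant, yet no hexagon is resonant. Every tool you invoke applies verbatim to that drawing: elementary components, Lemma~\ref{nice cycle}, nice boundaries of faces of components, and following forbidden edges through hexagonal faces. So no argument built from these alone can work. The missing step must use the fact that a coronoid system sits in the planar hexagonal lattice, which your sketch mentions but never uses.

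Your ``straight line'' idea does not help either. A line of edges from the exterior face to a hole does not separate $G$, so the colour count that makes elementary cuts rigid in hexagonal systems is unavailable; the flow across such a line genuinely varies, cf.\ Theorem~\ref{main results}.

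\textbf{A possible ingredient.} Consider a closed dual curve that crosses only edges whose black end lies inside it. Such a curve can use only three dual directions, pairwise at $120^\circ$. Counting edges shows that the lattice region it encloses has $X/3$ more black than white vertices, where $X\geq 3$ is the number of crossings. You would then still have to show that the holes inside the curve cannot cancel this excess. That would contradict the fact that the vertices inside the curve are matched among themselves, since no perfect-matching edge crosses it.
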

For a cycle $C$ of a plane graph $G$, let $I[C]$ (resp. $O[C]$) denote the subgraph of $G$ consisting of $C$ and its interior (resp. exterior).
\begin{color}{black}
\begin{lem}[\cite{ZZ2000,ZZYh04}]\label{I[C] elementary}
For a nice cycle $C$ of a plane bipartite elementary graph, both $I[C]$ and $O[C]$ are elementary.
\end{lem}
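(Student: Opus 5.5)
The plan is to prove directly that every edge of $I[C]$ (and likewise of $O[C]$) lies in some perfect matching of that subgraph. Connectivity of $I[C]$ and $O[C]$ comes from the planar structure of $G$. Lemma~\ref{nice cycle} is not needed. The argument for $O[C]$ is the same as for $I[C]$ with "interior" replaced by "exterior", so I describe only $I[C]$.

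\emph{Step 1.} Since $C$ is nice, fix a perfect matching $M_1$ of $G$ such that $C$ is $M_1$-alternating. Every vertex of $C$ is matched by $M_1$ along $C$. Hence no $M_1$-edge joins a vertex of $C$ to a vertex off $C$, and $M_1\cap E(I[C])$ is a perfect matching of $I[C]$. Call it $M_1$ again. This already covers every edge of $C$ and every edge of $M_1$.

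\emph{Step 2.} Let $e$ be an edge of $I[C]$ not in $M_1$. Since $G$ is elementary, there is a perfect matching $M_2$ of $G$ with $e\in M_2$. Let $Q$ be the $(M_1,M_2)$-alternating cycle of $M_1\triangle M_2$ through $e$.
\begin{itemize}
\item If $Q$ avoids all vertices of $C$, then $Q$ lies in $I[C]$, and $M_1\triangle E(Q)$ is a perfect matching of $I[C]$ containing $e$.
\item Otherwise, let $P$ be the minimal subpath of $Q$ containing $e$ whose two end vertices $x,y$ lie on $C$ and whose internal vertices avoid $C$. (If $e$ is a chord of $C$, then $P=e$.) The internal vertices of $P$ lie strictly inside $C$, so $P\subseteq I[C]$.
\end{itemize}

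\emph{Step 3.} I then show that the first and last edges of $P$ lie in $M_2\setminus M_1$. At $x$ the $M_1$-edge lies on $C$, while the edge of $P$ at $x$ is off $C$; so it is not in $M_1$, and alternation along $Q$ puts it in $M_2$. The same holds at $y$. Therefore $P$ has odd length, so $x$ and $y$ have different colours, and both $x$–$y$ arcs of $C$ have odd length. Exactly one of these arcs, call it $A$, starts at $x$ with a non-$M_1$ edge. Since $A$ is $M_1$-alternating of odd length, it also ends at $y$ with a non-$M_1$ edge. Then $P\cup A$ is an $M_1$-alternating cycle in $I[C]$, and $M_1\triangle E(P\cup A)$ is a perfect matching of $I[C]$ containing $e$.

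\emph{Step 4.} It remains to show that $I[C]$ is connected. A plane elementary bipartite graph with more than two vertices is 2-connected, so every vertex inside $C$ is joined to $C$ by a path of $G$ lying in $I[C]$. Hence $I[C]$ is connected, and therefore elementary.

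The main point requiring care is the choice of $P$ in Step~2 and the parity argument in Step~3. In particular one must check that the end edges of $P$ are $M_2$-edges, which uses that $M_1$ saturates $C$ along $C$, and that the chosen arc $A$ glues with $P$ into an $M_1$-alternating cycle.
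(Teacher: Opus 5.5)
Your Step~3 picks the wrong arc of $C$, so $P\cup A$ is not $M_1$-alternating. You correctly show that the edges of $P$ at $x$ and at $y$ are not in $M_1$. But you then choose $A$ so that its edges at $x$ and at $y$ are \emph{also} not in $M_1$. So in $P\cup A$ each of $x,y$ meets two non-$M_1$ edges and no $M_1$-edge. If $P$ and $A$ have lengths $2p+1$ and $2a+1$, the cycle $P\cup A$ has only $p+a$ edges of $M_1$ against $p+a+2$ others. Hence $P\cup A$ is not $M_1$-alternating, and $M_1\triangle E(P\cup A)$ is not even a matching: at $x$ it keeps the $M_1$-edge of $x$ (which lies on the other arc) and adds both edges of $P\cup A$ at $x$.

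The repair is to use the other arc $A'$, the one beginning at $x$ with the $M_1$-edge of $x$. Being $M_1$-alternating of odd length, it ends with the $M_1$-edge of $y$. Then $P\cup A'$ is an $M_1$-alternating cycle in $I[C]$ (a genuine cycle, since the inner vertices of $P$ avoid $C$), and $M_1\triangle E(P\cup A')$ is a perfect matching of $I[C]$ containing $e$. Equivalently, keep $A$ but replace $M_1$ by $M_1\triangle E(C)$: relative to it, $A$ starts and ends with matching edges, while $P$, having no edge on $C$, is unaffected. The same correction is needed for $O[C]$.

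Three smaller points:
\begin{itemize}
\item Step~1 alone covers only the $M_1$-edges of $C$. The other edges of $C$ are covered by $M_1\triangle E(C)$, and they must be excluded from Steps~2--3 anyway, since there you use that the end-edges of $P$ are off $C$.
\item The fact that both end-edges of $P$ lie in $M_2$ is also what guarantees $x\neq y$; otherwise $Q$ would have two $M_2$-edges at $x$. You should state this explicitly.
\item In Step~4, connectivity of $G$ already suffices: follow any path from an inner vertex until it first meets $C$.
\end{itemize}
With these fixes your argument is a correct, self-contained proof.

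The paper gives no proof of this lemma and simply quotes it from the literature. A standard alternative route uses bipartite ear decompositions. Since $C$ is a nice cycle, $G$ has a bipartite ear decomposition starting at $C$, and by planarity each ear lies entirely in $I[C]$ or in $O[C]$. This splits the decomposition into ear decompositions of the two sides. That route is shorter once the ear-decomposition theorem is available. Yours needs only one alternating cycle of $M_1\triangle M_2$, rerouted along $C$, together with the Jordan curve theorem.
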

\begin{lem}[\cite{ZZ1999}]\label{$M$-alternating face}
Let $G$ be a plane elementary bipartite graph and $M$ be a perfect matching of $G$. Then the interior of each proper (improper) $M$-alternating cycle of $G$, there must exist a proper (improper) $M$-alternating face.
\end{lem}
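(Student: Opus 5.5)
We argue by a minimal counterexample, using a standard ear argument. We treat the proper case; the improper case is symmetric with the colours exchanged. Let $C$ be a proper $M$-alternating cycle of the plane elementary bipartite graph $G$. Since $C$ is $M$-alternating, it is nice, so by Lemma \ref{I[C] elementary} the subgraph $I[C]$ is elementary. Moreover $M\cap E(I[C])$ is a perfect matching of $I[C]$. Among all proper $M$-alternating cycles contained in $I[C]$ (there is at least one, namely $C$), choose $C'$ whose interior contains the minimum number of faces. We claim that $C'$ bounds a face, which proves the lemma. Suppose not. Applying Lemma \ref{I[C] elementary} again (to $C'$ in $I[C]$), $I[C']$ is elementary, and $M':=M\cap E(I[C'])$ is a perfect matching of it.

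\emph{Step 1: an interior ear.} Since $C'$ is not a face boundary and $I[C']$ is connected, some edge $e=uv\notin E(C')$ of $I[C']$ has $u\in V(C')$; here $v$ is either an interior vertex or another vertex of $C'$. Every vertex of $C'$ is $M$-matched along $C'$, so $e\notin M$. Because $I[C']$ is elementary, it has a perfect matching $N$ with $e\in N$. Then $M'\triangle N$ contains an $(M',N)$-alternating cycle through $e$. Follow this cycle from $u$ along $e$ and stop at the first vertex $w\neq u$ lying on $C'$.

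This gives an $M$-alternating path $P$ from $u$ to $w$ whose internal vertices lie strictly inside $C'$. The edge of $P$ entering $w$ is not on $C'$, while the $M$-edge at $w$ is on $C'$. Hence $P$ begins and ends with non-$M$ edges, has odd length, and $u,w$ have different colours.

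\emph{Step 2: closing the cycle.} The vertices $u,w$ split $C'$ into two arcs, each of odd length. Exactly one arc $Q$ starts with the $M$-edge at $u$. By alternation along an odd path, $Q$ also ends with an $M$-edge at $w$, and this edge is the $M$-edge of $w$. So $D=P\cup Q$ is an $M$-alternating cycle in $I[C']$. Its interior is strictly contained in that of $C'$: the other arc contributes at least one face that $D$ cuts off.

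\emph{Step 3: orientation.} This is the step needing care, and the main check. The interior of $D$ lies on the same side of $Q$ as the interior of $C'$. Hence the clockwise orientation of $D$ traverses $Q$ in the same direction as the clockwise orientation of $C'$. Therefore the $M$-edges on $Q$ still go from white to black, and since $D$ is $M$-alternating this forces every $M$-edge of $D$ to do so. Thus $D$ is a proper $M$-alternating cycle in $I[C]$ with fewer interior faces than $C'$, contradicting minimality.

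I expect the main obstacle to be the bookkeeping in Steps 1 and 2: ensuring that the ear $P$ stays inside $C'$, which is why we take $N$ within $I[C']$, and that exactly one arc closes it alternately. The orientation argument in Step 3 is then a short planar observation.
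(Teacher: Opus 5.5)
The paper gives no proof of this lemma; it quotes it from \cite{ZZ1999}, so there is no in-paper argument to compare yours against. Your proof is correct and self-contained. The extremal choice of $C'$ works, as does the ear $P$ taken from the $(M',N)$-alternating cycle through $e$. Choosing the closing arc $Q$ as the one that starts with the $M$-edge at $u$ is right. The orientation check in Step 3 also holds: the interior of $D$ lies on the same side of $Q$ as the interior of $C'$, so $Q$ is traversed in the same direction. By alternation, every $M$-edge of $D$ then goes from white to black.

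Two points are only implicit and deserve a sentence each in a write-up.

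\emph{Existence of $w$.} The alternating cycle of $M'\triangle N$ through $e$ also contains the $M$-edge at $u$, which lies on $C'$. Its other end is therefore a vertex of $C'$ different from $u$, and it is reached before the cycle closes up.

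\emph{Strict decrease in the number of faces.} This is the theta-graph consequence of the Jordan curve theorem. The path $P$ splits the interior of $C'$ into the interiors of $P\cup Q$ and $P\cup Q'$, where $Q'$ is the other arc. Each of these contains at least one face of $G$.

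Finally, Lemma \ref{I[C] elementary} is convenient but not essential here. Any perfect matching $N$ of $G$ containing $e$ would do, because by planarity the alternating cycle through $e$ cannot leave the interior of $C'$ without first meeting a vertex of $C'$.
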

\end{color}

\textcolor{black}{Let $G$ be a plane connected bipartite graph, and $F(G)$ be the set of all faces of $G$.
For any $F \subseteq F(G)$, the restricted resonance graph $R_{F}(G)$ of $G$ with respect to $F$
is a graph whose vertices are the perfect matchings of $G$, and two perfect matchings $M_1, M_2$ are joined by an edge  provided their symmetric difference $M_1 \triangle M_2$ forms exactly one cycle that is the boundary of a face in  $F$. When $F$ is the set of all interior faces of $G$,  $R_{F}(G)$ is coincides with the resonance graph of $G$, denoted by $R(G)$.}

The \emph{\textcolor{black}{Cartesian} product} of simple graphs $G$ and $H$ is the graph $G \square H$ whose
vertex set is $V(G) \times V(H)$ and vertices $(u_1, v_1)$ and $(u_2, v_2)$ are adjacent if and only if  either $u_1u_2 \in E(G)$ and $v_1 = v_2$ or $u_1 = u_2$ and $v_1v_2 \in E(H)$.

\begin{lem}[\cite{ZZYh04}]\label{component}
 Let $G$ be a matchable plane bipartite graph and $F \subseteq F(G)$. Let $G_1,\ldots,G_k$ denote the elementary components of $G$. Then
$$R_{F}(G)\cong R_{F_{1}}(G_1)\square \cdots \square R_{F_{k}}(G_k),$$ where $F_{i}=F(G_i) \cap F,  i = 1, \ldots, k$.
\end{lem}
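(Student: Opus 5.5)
The statement to prove is the last displayed one, Lemma~\ref{component}, which is cited from \cite{ZZYh04}. I would prove it by induction on the number $k$ of elementary components, constructing the isomorphism explicitly.

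\textbf{Step 1: perfect matchings split across components.} Let $G_1,\dots,G_k$ be the elementary components. By definition, no perfect matching of $G$ uses a forbidden edge. So every perfect matching $M$ of $G$ is contained in $G_1\cup\cdots\cup G_k$. Its restriction $M_i=M\cap E(G_i)$ is then a perfect matching of $G_i$, since the $G_i$ partition $V(G)$. Conversely, any choice of perfect matchings $M_i$ of the $G_i$ has union a perfect matching of $G$. This gives a bijection
\[
\phi: M\mapsto (M_1,\dots,M_k)
\]
between the vertex set of $R_F(G)$ and that of the Cartesian product.

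\textbf{Step 2: adjacency is preserved.} Suppose $M\triangle M'$ is a single cycle bounding a face $f\in F$. This cycle consists of edges lying in perfect matchings, so it lies in one component, say $G_i$. Then $\phi(M)$ and $\phi(M')$ differ exactly in coordinate $i$, and there $M_i\triangle M'_i$ is the boundary of $f$.

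The key point is that $f$ must also be a face of $G_i$, so that $f\in F_i=F(G_i)\cap F$. Here I use planarity. A face boundary of $G$ that lies in a subgraph $G_i$ bounds a region of the plane containing no edges of $G$. Hence that region contains no edges of $G_i$, and it is still a face of $G_i$. In this sense each face of $G$ whose boundary lies in $G_i$ is identified with a face of $G_i$.

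Conversely, suppose $M_i\triangle M'_i$ bounds a face $f\in F_i$. Then $f\in F(G)\cap F(G_i)$, understood as a face of both graphs. The union matchings $M$ and $M'$ then satisfy $M\triangle M'=\partial f$, so they are adjacent in $R_F(G)$.

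\textbf{Main obstacle.} The delicate point is making precise what $F(G_i)\cap F$ means, since $F(G)$ and $F(G_i)$ are faces of different plane graphs. A face of $G_i$ may be a union of several faces of $G$, namely when other components lie inside it. I would adopt the convention that a face of $G_i$ is identified with a face of $G$ exactly when their boundaries coincide and the face of $G_i$ contains no vertices of $G$. The argument in Step 2 then shows that the flippable faces match under $\phi$.

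Finally, a face of $G$ whose boundary is not contained in any single $G_i$ can never be an $M$-alternating cycle. Such a boundary would have to use a forbidden edge, yet every edge of an $M$-alternating cycle lies in a perfect matching. So these faces contribute no edges to $R_F(G)$, and $\phi$ is a graph isomorphism.
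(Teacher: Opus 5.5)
Your proof is correct. The paper gives no proof of this lemma; it simply quotes it from \cite{ZZYh04}, so there is no argument in the paper to compare with. Your argument is the standard one behind that result:
\begin{itemize}
\item restriction to the $G_i$ gives a bijection on perfect matchings;
\item an $(M,M')$-alternating cycle uses only allowed edges, so it lies in a single elementary component $G_i$;
\item a face of $G$ bounded by a cycle of $G_i$ is also a face of $G_i$.
\end{itemize}
Your proof also never uses bipartiteness, so it works for any matchable plane graph with ``elementary components'' read as components of the allowed-edge subgraph.

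A few cosmetic points.
\begin{itemize}
\item The announced induction on $k$ is never used, since $\phi$ treats all components at once.
\item The cleanest reading of $F(G_i)\cap F$ is ``the same open region of the plane''. Your Step~2 already proves exactly this: $f$ is a connected open set disjoint from $G\supseteq G_i$ whose boundary lies in $G_i$, hence a component of $\mathbb{R}^2\setminus G_i$. The ad hoc convention ``no vertices of $G$'' is therefore unnecessary.
\item A face of $G_i$ can be split into several faces of $G$ not only by other components but also by forbidden edges of $G$.
\item Your final paragraph is already covered by Step~2.
\end{itemize}
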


\subsection{Flow across a cut}

For a coronoid system $G$ with $n$ holes, fix a white-black vertex coloring of $G$. For convenience,  let $h_{1},h_{2},\ldots,h_{n}$ be $n$ holes and $h_0$ be the exterior face of $G$. So $h_0,h_1,\ldots,h_n$ are all the non-hexagonal faces of $G$. The \emph{dual graph} $G^{*}$ of $G$ is a plane graph that has  one vertex $h^{*}$ at the center of each hexagon $h$   and one vertex $h_i^*$ in the interior of each non-hexagon face $h_i$,   two vertices $f_{1}^{*}$ and $f_{2}^{*}$ are joined by the edge $e^{*}$ in $G^{*}$  if their corresponding faces $f_{1}$ and $f_{2}$ \textcolor{black}{share a common edge $e$} ($e^{*}$ only crosses edge $e$ at the center, $e^*$ is straight in the interiors of hexagons, \textcolor{black}{but not necessarily straight in the interiors of non-hexagonal faces}).

We first define a cut segment of a coronoid system $G$ between non-hexagon faces.
\begin{defi}\label{2.10}
{\rm  \textcolor{black}{A directed path $L$ is called  a \emph{cut segment} of a coronoid system $G$ if its underlying graph is a path of  the dual graph $G^*$   such that  two end vertices correspond to two distinct non-hexagonal faces of $G$ and  the internal vertices correspond to  hexagons of $G$.
The set of edges of $G$ crossed by $L$ is called a \emph{cut}, denoted by $L^*$.}}
\end{defi}
\begin{figure}
\centering
\includegraphics[scale=0.7]{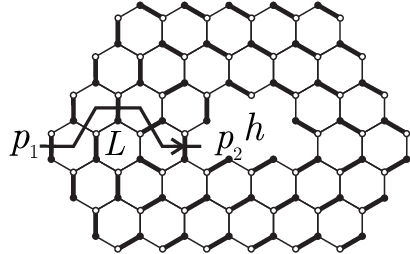}
\caption{\label{Figure_2} A coronoid system  with a cut segment $L$ and  a perfect matching $M$.}
\end{figure}
\begin{defi}\label{2.11}
{\rm    The \emph{flow} of a given perfect matching $M$ of $G$ across a cut segment $L$, denoted by flow$_{L}(G, M)$, is the number of  edges in $M$ crossed by $L$, where each such edge in $M$ is
counted positively (resp. negatively) if its white end-vertex is to the left side (resp. right side) of the cut segment $L$.}
\end{defi}

For convenience,  let $(L^{*})^{+}=\{e\in L^*|\mbox{the white end-vertex of $e$ is to the left side of  $L$}\}$ and $(L^{*})^{-}=\{e\in L^*|\mbox{the white end-vertex of $e$ is to the right side of  $L$}\}$.
Then
\begin{equation}\label{dif}
    {\rm flow}_{L}(G, M)=|(L^{*})^{+}\cap M|-|(L^{*})^{-}\cap M|.
\end{equation}

 For example, for coronoid system $G$ in Fig. \ref{Figure_2} with a cut segment $L$ and perfect matching $M$ we have  flow$_{L}(G, M)=3-1=2$.

Let $G_1$ be a subgraph of $G$ such that $M\cap E(G_1)$ is a perfect matching of $G_1$. We use ${\rm flow}_{L}(G_{1}, M)$
to denote \textcolor{black}{the flow of the restricted  $M|_{G_1}:=M\cap E(G_1)$ across a cut segment $L$. That is,
\begin{equation}\label{rest}
    {\rm flow}_{L}(G_1, M)=|(L^{*})^{+}\cap M|_{G_1}|-|(L^{*})^{-}\cap M|_{G_1}|.
\end{equation}}
Hence we have
\begin{equation}
      {\rm flow}_{L}(G, M)={\rm flow}_{L}(G_1, M)+{\rm flow}_{L}(G-V(G_1), M).
\end{equation}

\begin{lem}\label{tree}
\begin{color}{black}Let $G$ be a  coronoid system with $n$ holes. Then $G^*$ admits
a  tree $T$ that  contains all vertices  corresponding to the non-hexagonal faces of $G$ and each leaf corresponds to a non-hexagonal face. 
\end{color}
\end{lem}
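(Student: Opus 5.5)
We will prove the statement by first showing that $G^*$ is connected and then extracting a suitable tree by a spanning-tree-and-pruning argument. Since $G$ is a 2-connected plane graph, hence connected, its dual $G^*$ is connected. In particular the vertices $h_0^*,h_1^*,\dots,h_n^*$ all lie in one component. Let $T_0$ be any spanning tree of $G^*$. It contains every vertex of $G^*$, and in particular all vertices corresponding to non-hexagonal faces.

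Next we prune $T_0$. As long as the current tree has a leaf corresponding to a hexagon of $G$, we delete that leaf. Each deletion keeps the graph a tree: removing a leaf from a tree with at least two vertices leaves a tree. The process never deletes any $h_i^*$, since only hexagon-leaves are removed. It stops after finitely many steps with a tree $T$ that contains all of $h_0^*,\dots,h_n^*$ and has no leaf of hexagon type. Because $n\ge 1$, $T$ has at least two vertices, so its leaves are well defined. Hence every leaf of $T$ corresponds to a non-hexagonal face.

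Equivalently, $T$ is the union of the unique $T_0$-paths between pairs of the vertices $h_i^*$, that is, the minimal subtree of $T_0$ spanning them. This description gives the same conclusion directly.

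The only point requiring care is connectivity of $G^*$ as defined in the paper, where $G^*$ has a vertex for each hexagon and each non-hexagonal face, with edges for shared edges of $G$. Any two faces of a connected plane graph can be joined by a sequence of faces, consecutive ones sharing an edge. To see this, take a curve in the plane between interior points of the two faces that avoids the vertices of $G$; the successive faces it passes through share the edges it crosses. Multiple edges in $G^*$, arising when two faces share several edges, are harmless since we take a spanning tree. I expect no real obstacle beyond stating this connectivity fact cleanly.
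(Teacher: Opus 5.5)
Your proposal is correct and matches the paper's proof: take a spanning tree of the connected dual $G^*$ and repeatedly delete leaves that correspond to hexagons. Your extra care about the connectivity of $G^*$ and about $n\ge 1$ guaranteeing at least two vertices (so leaves are well defined) only fills in details the paper leaves implicit.
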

\begin{proof}
\begin{color}{black}Since the dual graph $G^{*}$ is connected, $G^{*}$ has a spanning tree $T_0$. Starting from $T_0$,  delete repeatedly a leaf  corresponding to a hexagon of $G$.  The resulting graph is still a tree, say $T$.
Then every leaf of $T$ corresponds to a
non-hexagonal face of $G$. Since no vertex corresponding to non-hexagonal face has been deleted, $T$ contains all vertices  corresponding to non-hexagonal faces of $G$. \end{color}
\end{proof}
\begin{color}{black}
We now give an orientation to the tree $T$ such that the path from  $h_0^{*}$ to each other vertex  of $T$ is a directed path. The resulting digraph is a rooted tree, also denoted by $T$, where $h_0^*$ is the root, a unique vertex of in-degree 0, and the all leaves are of in-degree 1 and out-degree 0.  

From the rooted tree $T$ we choose $n$ cut segments of $G$. For each $1\leq i\leq n$, since $T$ has a unique directed path from the root to $h_i^*$, take the directed subpath, say $L_i$, from $h_j^*$ to $h_i^*$ whose internal vertices correspond to hexagons of $G$ and $h_j^*$ corresponds to a non-hexagonal face $h_j$ of $G$, $j\not=i$.
Thus, $L_i$ is a cut segment in $G$. We say that   $L_1,\ldots,L_n$ are {\it chosen $n$ cut segments} in  $G$. Since $T$ is a tree of $G^*$ connecting all $h_i^*$, by duality $G-\cup_{i=1}^{n}L_i^*$ is a connected subgraph in which each interior face is a hexagon.
\end{color}

\textcolor{black}{It is possible that the two cut segments of the chosen $n$ cut segments have a common part at start;  For example, see  $L_{1}$ and $L_{2}$ in Fig. \ref{Figure_3}(b).}
\begin{figure}
\centering
\includegraphics[scale=0.8]{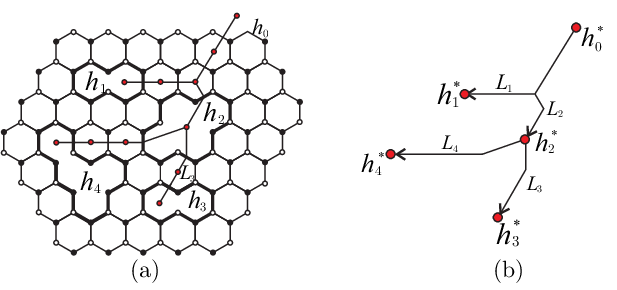}
\caption{\label{Figure_3} (a) A coronoid system $G$ and a tree in  $G^{*}$, (b) four chosen cut segments $L_{1}, L_{2},L_{3}$  and $L_{4}$ in a rooted tree $T$. }
\end{figure}

\subsection{Main result}
Based on the above concepts and results, we \textcolor{black}{now provide a criterion for determining when} two perfect matchings of a coronoid system $G$ belong to the same connected component of the resonance graph $R_6(G)$, \textcolor{black}{using the flow of perfect matchings across a cut segment.}

\begin{thm}\label{main results}
Let $G$ be a matchable coronoid system with $n$ holes and  chosen $n$ cut segments $L_i, 1\leq i\leq n$. Two perfect matchings of $G$ lie in the same connected component of the resonance graph $R_6(G)$ if and only if their flows across each of the $n$ chosen cut segments are equal.
\end{thm}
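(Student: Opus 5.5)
Necessity is a local computation and sufficiency goes by induction on the symmetric difference, so I prove the two directions separately.

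\textbf{Necessity.} It suffices to show that a single flip along a hexagon $h$ leaves ${\rm flow}_{L_i}$ unchanged for every $i$. If $L_i$ does not pass through $h^*$, then $L_i^*\cap E(h)=\emptyset$ and nothing changes. If $L_i$ passes through $h^*$, it enters $h$ across one edge $e$ and leaves across another edge $e'$; since $L_i$ is a path of $G^*$ these are distinct edges. The edges of $h$ alternate in $M$ and $M'=M\triangle E(h)$. I will check two cases by orientation bookkeeping with the white/black colouring:
\begin{itemize}
\item if $e,e'$ lie at odd distance on $h$, exactly one of them is in $M$ and the flip swaps them, and the two contributions have the same sign;
\item if $e,e'$ lie at even distance on $h$, both are in $M$ or neither is, and their signs are opposite, so their contributions cancel both before and after the flip.
\end{itemize}
The underlying observation is that along the boundary of $h$ the white end alternates sides relative to a line crossing $h$. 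Thus the flow is invariant on each component of $R_6(G)$.

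\textbf{Sufficiency.} Let $M,M'$ have equal flows across all $L_i$. I argue by induction on $|M\triangle M'|$. The set $M\triangle M'$ is a disjoint union of $(M,M')$-alternating cycles $C$, and I would like to show that the components can be joined cycle by cycle. Take an innermost cycle $C$ (no other cycle of $M\triangle M'$ inside it) and let $M''=M\triangle E(C)$.

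\emph{Key claim:} every cycle $C$ of $M\triangle M'$ has no hole in its interior. Suppose the claim holds. Then $I[C]$ is a hexagonal system with perfect matching $M|_{I[C]}$ (no edge of $M$ crosses $C$, because $C$ is $M$-alternating). By Lemma~\ref{hexagonal system}, $M|_{I[C]}$ and $M''|_{I[C]}$ are connected by hexagon flips inside $I[C]$, and these are flips in $G$ with $M$ fixed outside $I[C]$. So $M\sim M''$. Moreover $M''\triangle M'$ is strictly smaller and the flows are still equal (by necessity, applied to $M\sim M''$), so induction finishes.

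\emph{Proof of the key claim, and the main obstacle.} I use the flows to rule out holes inside $C$. For an alternating cycle $C$ and a cut segment $L$ from $h_j^*$ to $h_i^*$, the crossings of $L$ with $C$ satisfy the following. Each crossing changes the flow difference ${\rm flow}_L(G,M)-{\rm flow}_L(G,M')$ by $\pm1$, with a consistent sign depending on whether $C$ is proper or improper and on the crossing direction (in or out). Hence the contribution of $C$ equals $\pm(\text{winding: }[h_i\subset {\rm int}\,C]-[h_j\subset {\rm int}\,C])$.

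Summing over the cycles of $M\triangle M'$ gives $0$ for each $L_i$. This does not immediately force every cycle to contain no hole: cycles with opposite orientations could cancel, and nested cycles can occur. I would handle this by the following choice. Write $d_k$ for the signed number of cycles (proper $=+1$, improper $=-1$) containing $h_k$, with $d_0=0$. Since the $L_i$ form a tree rooted at $h_0^*$, equal flows give $d_i-d_j=0$ along every tree edge, hence $d_k=0$ for all $k$.

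If some cycle encloses a hole, I instead modify the decomposition: I replace the set of cycles enclosing holes by a different alternating decomposition, or I pick an innermost cycle enclosing no hole at all. Concretely, I would choose any cycle $C$ of $M\triangle M'$ that contains no hole, and that must exist unless every innermost cycle contains a hole. Suppose an innermost cycle $C$ contains hole $h_k$. Then $d_k=0$ forces an oppositely oriented cycle $C'$ also surrounding $h_k$. I expect to need Lemma~\ref{$M$-alternating face}-type reasoning on the annulus between $C$ and $C'$: the region between two oppositely oriented nested alternating cycles contains a resonant hexagon flippable in the right direction, and flipping it reduces $|M\triangle M'|$ or the enclosed area. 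Making this annulus argument rigorous is where I expect the real difficulty to lie. I would try to apply Lemma~\ref{$M$-alternating face} to $I[C]$ with the holes filled in, using elementarity from Lemma~\ref{I[C] elementary}, and show that the alternating face produced can be chosen to be a hexagon rather than a hole.
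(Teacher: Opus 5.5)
Your necessity computation is correct; it is Corollary~\ref{Necessity} done locally on a single hexagon. Your I-cycle step via Lemma~\ref{hexagonal system} matches the paper. Your signed crossing count giving $d_k=0$ for every hole is a valid, somewhat cleaner derivation of the paper's balance claim (Claim~1 in the proof of Lemma~\ref{Sufficiency}). The gap is the rest of sufficiency.

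Your ``key claim'' is false. If $M\triangle M'$ consists of a proper and an improper cycle nested around one hole, your own formula gives equal flows across every $L_i$, yet both cycles enclose a hole. So everything rests on the fallback, and the fallback pairs the wrong cycles. Pairing an innermost $C$ around $h_k$ with one oppositely oriented $C'$ around $h_k$, and working in ``the annulus between them'', breaks down as soon as that region contains another hole. For example, let $C'$ be proper and enclose two improper cycles $C$ and $D$ of $M\triangle M'$ around distinct holes $h_1,h_2$. Then $M\triangle E(C)\triangle E(C')$ differs from $M$ by a family that is unbalanced at $h_2$. By your own necessity argument it is therefore not in the component of $M$, so the pair $\{C,C'\}$ cannot be eliminated on its own: $C'$ must be flipped together with $D$. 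Filling in holes to invoke Lemma~\ref{$M$-alternating face} does not rescue this. $M$ need not extend over the filled region, and nothing prevents the face produced from being a hole.

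Two ingredients are missing: the right grouping of cycles, and a lemma realizing a group flip by hexagon flips. The paper takes a \emph{maximal} cycle $C_1$ of $M\triangle M'$, say proper, together with \emph{all} maximal improper cycles $C_{11},\dots,C_{1r}$ of $M\triangle M'$ inside $I[C_1]$. A hole in the region between them would be enclosed by $C_1$, and possibly by other proper cycles. By maximality of $C_1$ and of the $C_{1j}$, it would be enclosed by no improper cycle, contradicting balance. So the region is hole-free.

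Deleting the interiors of the $C_{1j}$ from $I[C_1]$ gives a coronoid system $G_1$ (Lemma~\ref{equivalent}). Its non-hexagonal faces are bounded by $M$-alternating cycles, so $G_1$ is elementary by Lemmas~\ref{nice cycle} and~\ref{elementary}. Lemma~\ref{path} then inducts on the number of faces in the region. At each step Lemma~\ref{$M$-alternating face} supplies a proper $M$-alternating face, which must be a hexagon because the inner boundaries are improper and there are no holes. After flipping it, each remaining piece again has a proper outer boundary and improper inner boundaries.

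This converts $M$ into $M\triangle E(C_1\cup C_{11}\cup\cdots\cup C_{1r})$ by hexagon flips, removing $r+1\ge 2$ cycles, and the induction on the number of cycles closes. Your phrase ``reduces $|M\triangle M'|$ or the enclosed area'' needs exactly this invariant to be well-founded, and the proposal does not supply it.
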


\section{Proof of Theorem \ref{main results}}
 Throughout this section, let $G$ be a coronoid system with $n$ holes and  $n$ chosen cut segments  $L_{1},L_{2},\ldots,L_{n-1},L_{n}$. \textcolor{black}{For convenience, we call a directed path of the rooted tree $T$ from some $h^*_i$ to $h_j^*$ a {\it cut line} of $G$, which may be divided into several cut segments.}

First, we show some lemmas, which will play a crucial role in the proof of Theorem \ref{main results}. The following lemma explains the variation between the flows of two perfect matchings $M_{1}$, $M_{2}$ \textcolor{black}{across a cut line of $G$ restricted on an $(M_{1},M_{2})$-alternating cycle  (this definition is an extension of  Definition \ref{2.11}). }

\begin{lem}\label{one II cycle}
Let $M_{1}$, $M_{2}$ be  perfect matchings of $G$,  $C$  an $(M_{1},M_{2})$-alternating cycle, \textcolor{black}{and $L$ a cut line. If both endpoints of $L$ are either in the interior of $C$ or in the exterior of $C$}, then ${\rm flow}_{L}(C,M_{1})={\rm flow}_{L}(C,M_{2})$. Otherwise, $|{\rm flow}_{L}(C,M_{1})-{\rm flow}_{L}(C,M_{2})|=1$.
\end{lem}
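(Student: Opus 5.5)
We prove Lemma \ref{one II cycle} by walking around $C$ and recording how it meets the cut line $L$. Since $L$ is a directed path in $G^*$ whose internal vertices are hexagon centres, it is a simple curve in the plane that meets $G$ only at midpoints of edges. Its endpoints $h_i^*,h_j^*$ lie inside non-hexagonal faces, so neither lies on $C$. Let the edges of $C$ crossed by $L$ be $e_1,\dots,e_k$, taken in the order they are met along $L$. Each crossing is transversal, so $L$ switches between the interior and exterior of $C$ at each $e_t$. Hence $k$ is even exactly when both endpoints of $L$ lie on the same side of $C$. Since every edge of $C$ is in exactly one of $M_1,M_2$, we get
\[
{\rm flow}_{L}(C,M_{1})-{\rm flow}_{L}(C,M_{2})=\sum_{t=1}^{k}\varepsilon_t\sigma_t ,
\]
where $\varepsilon_t=\pm1$ records whether the white end of $e_t$ lies to the left or to the right of $L$, and $\sigma_t=+1$ if $e_t\in M_1$ and $\sigma_t=-1$ if $e_t\in M_2$.

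The key step is to show that $\varepsilon_t\sigma_t$ alternates in sign between consecutive crossings $e_t,e_{t+1}$. Orient $C$ clockwise. Each edge of $C$ then points either from white to black or from black to white, and since $C$ is $M_1$-alternating, $M_1$-edges all point one way and $M_2$-edges the other way. So $\sigma_t=\pm\tau_t$ with one global sign, where $\tau_t$ records whether $e_t$ points white-to-black along clockwise $C$. Next, where $L$ crosses $e_t$ going from the inside of $C$ to the outside, the clockwise direction of $e_t$ points to one fixed side of $L$ (say its right). Where $L$ crosses from outside to inside, that direction points to the opposite side. Therefore $\varepsilon_t\tau_t$ equals a fixed constant times the crossing direction $\delta_t$ ($\delta_t=+1$ for in-to-out, $-1$ for out-to-in). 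Crossing directions alternate along $L$, so $\varepsilon_t\sigma_t=c\,\delta_t$ with $c$ a global constant, and it alternates.

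The sum $\sum_t c\,\delta_t$ is then $0$ when $k$ is even and $\pm1$ when $k$ is odd, which is exactly the claim. The step requiring the most care is the local orientation check: that at an in-to-out crossing the clockwise tangent of $C$ lies on a fixed side of the directed $L$. I would verify this with a single picture of a transversal crossing and use the plane orientation to make it rigorous, via the winding/Jordan curve convention that the interior lies to the right of a clockwise boundary.

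A further subtlety is that $L$ may meet $C$ at a crossed edge $e_t$ while passing through a hexagon having several edges on $C$. This is harmless, because $L$ crosses edges only at their midpoints and never passes through vertices, so each crossing is a genuine transversal crossing of the Jordan curve $C$.
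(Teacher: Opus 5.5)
Your proof is correct, and it is organized differently from the paper's.

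\textbf{The paper's route.} The paper labels the crossed edges in clockwise order along $C$. For each pair of consecutive crossings $e_i,e_{i+1}$ it argues as follows. If their white ends are on the same side of $L$, the subpath of $C$ between them has even length, so exactly one of them lies in $M_1$. Otherwise the subpath has odd length, so both lie in the same matching. Either way each pair contributes equally to both flows, and in the odd case a leftover edge contributes $\pm1$.

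\textbf{Your route.} You order the crossings along $L$ instead and prove the sign identity $\varepsilon_t\sigma_t=c\,\delta_t$ with a global constant $c$. Everything then reduces to the fact that in-to-out and out-to-in crossings alternate along the arc $L$, which is immediate from the Jordan curve theorem.

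\textbf{What your ordering buys.} It is more robust, not just different. Taken together, the paper's two parity assertions amount to saying that consecutive crossings along $C$ always traverse $L$ in opposite directions. That holds when $L$ separates like a line, but the paper does not justify it for a finite arc, and it need not hold. If the piece of $C$ between two consecutive crossings goes around an endpoint of $L$, both crossings can have the same direction. Then that pair does not cancel on its own, although the total over all crossings still does, as your argument shows. Ordering along $L$ avoids this issue entirely. The paper's pairwise case analysis is more pictorial, but as written it needs this extra geometric justification.

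\textbf{A small correction.} A cut line may pass through vertices $h_k^*$ of holes, so its internal vertices are not all hexagon centres. This is harmless: your argument only uses that $L$ is a simple arc meeting $G$ only at midpoints of crossed edges, with both endpoints off $C$.
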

\begin{proof}
If both endpoints of $L$ are either in the interior of $C$ or in the exterior of $C$ (see Fig. \ref{Figure_4}(a)),
then $|L^{*}\cap E(C)|$ is even. If $|L^{*} \cap E(C)|=0$, it is trivial.
Otherwise, let $L^{*}\cap E(C)=\{e_{1},e_{2},\ldots,e_{2m}\}$, which
are sequentially labeled  along the clockwise orientation of $C$. For any two successive edges $e_{i},e_{i+1}$ (the subscripts modulo $2m$), we discuss their effect on the flows of $M_{1}$ and $M_{2}$ across $L$.

 {\bf{ Case 1.}} The white end-vertices of  $e_{i},e_{i+1}$ are on the same side of $L$.

 If the white end-vertices of $e_{i}$ and $e_{i+1}$ are on the left of $L$ (see $e_{2m-1}$ and $e_{2m}$ in Fig. \ref{Figure_4}(a)), then $e_{i},e_{i+1}\in (L^{*})^{+}$ and $(L^{*})^{-}\cap \{e_{i},e_{i+1}\}=\emptyset$. Since an $(M_{1},M_{2})$-alternating path in $C$ with end-edges  $e_{i}$ and $e_{i+1}$ is of even length,  one of $e_{i}$ and $e_{i+1}$ is in $M_{1}$, and the other one is in $M_{2}$. Then $|(L^{*})^{+}\cap E(C)\cap M_{1}\cap \{e_{i},e_{i+1}\}|=|(L^{*})^{+}\cap E(C)\cap M_{2}\cap \{e_{i},e_{i+1}\}|=1$. If
the white end-vertices of $e_{i}$ and $e_{i+1}$ are on the right of $L$,
similarly we have  $e_{i},e_{i+1}\in (L^{*})^{-}$ and $(L^{*})^{+}\cap \{e_{i},e_{i+1}\}=\emptyset$, and
$|(L^{*})^{-}\cap E(C)\cap M_{1}\cap \{e_{i},e_{i+1}\}|=|(L^{*})^{-}\cap E(C)\cap M_{2}\cap \{e_{i},e_{i+1}\}|=1$.
\begin{figure}
\centering
\includegraphics[scale=0.8]{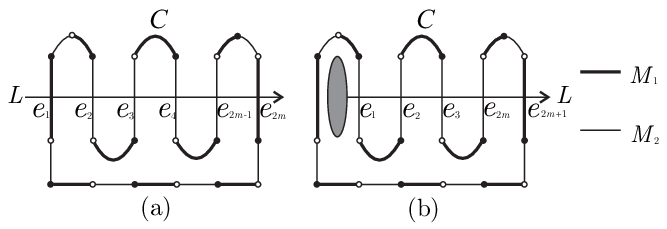}
\caption{\label{Figure_4}Illustration for the proof  of Lemma \ref{one II cycle}.}
\end{figure}

{\bf{ Case 2.}} The white end-vertices of  $e_{i},e_{i+1}$ are not on the same side of $L$.

Without loss of generality, let $e_{i}\in (L^{*})^{+}$ and $e_{i+1}\in (L^{*})^{-}$ (see  Fig. \ref{Figure_4}(a) when $i=3$). Since  an $(M_{1},M_{2})$-alternating path in $C$ with end-edges  $e_{i}$ and $e_{i+1}$ is of odd length,   either both $e_i$ and $e_{i+1}$ lie in $M_1$, or both lie in $M_2$.  If $e_{i},e_{i+1}\in M_{1}$, then $|(L^{*})^{+}\cap E(C)\cap M_{1}\cap \{e_{i},e_{i+1}\}|-|(L^{*})^{-}\cap E(C)\cap M_{1}\cap \{e_{i},e_{i+1}\}|=1-1=0$, and $|L^{*})^{+}\cap E(C)\cap M_{2}\cap \{e_{i},e_{i+1}\}|-|(L^{*})^{-}\cap E(C)\cap M_{2}\cap \{e_{i},e_{i+1}\}|=0-0=0.$
 If $e_{i},e_{i+1}\in M_{2}$, then the above equalities also hold whenever $M_1$ and $M_2$ interchange.

In short, for each $i$ we have $|(L^{*})^{+}\cap E(C)\cap M_{1}\cap \{e_{i},e_{i+1}\}|-|(L^{*})^{-}\cap E(C)\cap M_{1}\cap \{e_{i},e_{i+1}\}|=|(L^{*})^{+}\cap E(C)\cap M_{2}\cap \{e_{i},e_{i+1}\}|-|(L^{*})^{-}\cap E(C)\cap M_{2}\cap \{e_{i},e_{i+1}\}|$.
So $${\rm flow}_{L}(C,M_{j})=\sum\limits_{i=1}^m(|(L^{*})^{+}\cap E(C)\cap M_{j}\cap \{e_{2i-1},e_{2i}\}|-|(L^{*})^{-}\cap E(C)\cap M_{j}\cap \{e_{2i-1},e_{2i}\}|),$$ for $j=1,2$, which implies ${\rm flow}_{L}(C,M_{1})={\rm flow}_{L}(C,M_{2})$.

If one of the two endpoints of $L$ is in the exterior of $C$ and the other one is in the interior of $C$,  then $|L^{*}\cap E(C)|$ is odd (see Fig.  \ref{Figure_4}(b)). Similarly, let $L^{*}\cap E(C)=\{e_{1},e_{2},\ldots,e_{2m}, e_{2m+1}\}$. For \textcolor{black}{the first $m$ pairs of edges,} the above arguments are still correct, and the last edge $e_{2m+1}$ belongs to either $M_1$ or $M_2$. So   ${\rm flow}_{L}(C,M_{1})={\rm flow}_{L}(C,M_{2})\pm   1$.
\end{proof}
A cycle of a coronoid system $G$ is called an I-cycle if the interior of $C$  contains no hole of $G$, and a II-cycle otherwise.
It is evident that  the boundaries of holes and infinite face in $G$ are II-cycles.
\begin{lem}\label{two cycle}
Let $M_{1}$,
$M_{2}$ be any two perfect matchings of $G$ and \textcolor{black}{$L$ a cut line of $G$.}
For two disjoint  $(M_{1},M_{2})$-alternating II-cycles $C_{1}$ and $C_{2}$  such that $I[C_{2}]\subset I[C_{1}]$, if ${\rm flow}_{L}(C_{1}\cup C_{2},M_{1})={\rm flow}_{L}(C_{1}\cup C_{2},M_{2})$ and the endpoints of $L$ lie in the interior of $C_{2}$ and in the exterior of  $C_{1}$ separately, then one of $C_{1}$ and $C_{2}$ is a proper $M_{1}$-alternating cycle and
the other one is improper.
\end{lem}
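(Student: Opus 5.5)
The plan is to refine Lemma \ref{one II cycle}. That lemma only gives $|{\rm flow}_{L}(C,M_{1})-{\rm flow}_{L}(C,M_{2})|=1$ when $L$ separates its endpoints by $C$. I will compute the \emph{sign} of this difference and show it is determined by two things: whether $C$ is proper or improper $M_1$-alternating, and on which side of $C$ the line $L$ ends. Since $C_1$ and $C_2$ are disjoint, the flow is additive over them:
$${\rm flow}_{L}(C_{1}\cup C_{2},M_j)={\rm flow}_{L}(C_{1},M_j)+{\rm flow}_{L}(C_{2},M_j).$$
So the hypothesis says that the two differences $\delta_k={\rm flow}_{L}(C_k,M_{1})-{\rm flow}_{L}(C_k,M_{2})$, $k=1,2$, cancel.

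Fix an $(M_1,M_2)$-alternating cycle $C$, oriented clockwise, so that its interior lies to the right of the direction of travel. Every edge of $C$ lies in exactly one of $M_1,M_2$. For a crossed edge $e\in L^*\cap E(C)$ I record three signs:
\begin{itemize}
\item $s(e)=+1$ if the white end of $e$ is to the left of $L$, and $-1$ otherwise;
\item $t(e)=+1$ if $e\in M_1$, and $-1$ if $e\in M_2$;
\item $d(e)=+1$ if $L$ passes from the exterior to the interior of $C$ at $e$, and $-1$ otherwise.
\end{itemize}
Then $\delta=\sum_{e\in L^*\cap E(C)} s(e)t(e)$.

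Let $q(e)=+1$ if $e$ goes from white to black in the clockwise orientation of $C$. The local computation runs as follows. If $L$ crosses $e$ inward, $L$ points to the right of $C$'s direction. Hence the left side of $L$ contains the clockwise head of $e$, which gives $s(e)=-q(e)d(e)$. Because $G$ is bipartite, edges of $C$ alternate in $q$, and the $M_1$-edges of $C$ all share the same value of $q$. That value is $+1$ exactly when $C$ is proper $M_1$-alternating. So $t(e)=\varepsilon q(e)$, where $\varepsilon=+1$ for proper and $-1$ for improper. Therefore $s(e)t(e)=-\varepsilon d(e)$, and
$$\delta=-\varepsilon\sum_e d(e).$$
The sum $\sum_e d(e)$ is the net number of inward crossings, which is $+1$ if $L$ starts outside $C$ and ends inside, and $-1$ in the reverse case.

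Now apply this to $C_1$ and $C_2$. One endpoint of $L$ lies inside $C_2$, hence inside $C_1$ because $I[C_2]\subset I[C_1]$. The other lies outside $C_1$, hence outside $C_2$. So $L$ has the same endpoint configuration with respect to both cycles, and $\sum d$ takes the same value $\sigma\in\{\pm1\}$ for each. This gives $\delta_1+\delta_2=-\sigma(\varepsilon_1+\varepsilon_2)$, and the hypothesis forces $\varepsilon_1=-\varepsilon_2$. That is, one of $C_1,C_2$ is proper and the other improper $M_1$-alternating.

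The main obstacle is the local sign identity $s(e)=-q(e)d(e)$. The orientation conventions (clockwise, the left side of $L$, the white-to-black direction in the definition of proper) must be checked carefully, including at crossings where $L$ bends. Since $e^*$ crosses $e$ transversally at its midpoint, the side of $L$ on which each end of $e$ lies is determined locally, so one picture suffices. The exact global sign $-\varepsilon$ does not matter, only that it is the same for both cycles.
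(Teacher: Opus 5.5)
Your proof is correct, and it takes a different route from the paper's.

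\textbf{The paper's route.} It reuses the pairing from the proof of Lemma~\ref{one II cycle}. The crossed edges of $C_j$, listed clockwise, are grouped into consecutive pairs that are claimed to contribute equally to both flows. This attributes $\delta_j$ to a single ``last'' crossed edge $e_j$. Assuming both cycles proper and $L$ starting outside $C_1$, the paper then checks from its figures that $\delta_1=\delta_2=-1$, splitting into cases by whether the white ends of $e_1,e_2$ lie on the same side of $L$. This contradicts $\delta_1+\delta_2=0$.

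\textbf{Your route.} You prove the per-crossing identity $s(e)t(e)=-\varepsilon d(e)$ and sum it. This gives the closed formula $\delta=-\varepsilon\sigma$, where $\sigma$ is the signed intersection number of $L$ with $C$. For a proper $C$ and $L$ running from outside to inside, this is the paper's value $-1$. Your argument is direct rather than by contradiction, and it reproves Lemma~\ref{one II cycle} together with the sign of the difference.

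\textbf{What your route buys.} It does not depend on the order in which $L$ meets $C$, and this matters. Two consecutive crossings along $C$ in the same direction contribute $\pm 2$ to $\delta$, not $0$. When $L$ spirals around a hole lying just inside $C$, the directions along $C$ can follow a pattern such as $(+,+,+,+,-,-,-)$. For that pattern, no choice of starting edge makes all consecutive pairs cancel. So the paper's argument tacitly assumes the alternating configurations drawn in its figures, whereas yours needs no such assumption. The paper's version is shorter given the preceding lemma and easy to visualize.

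\textbf{One point to make explicit.} In your write-up, state that each edge of $C$ separates a face inside $C$ from a face outside it. Also state that $L$ meets $C$ only at the midpoints of crossed edges. Then $\sum_e d(e)$ telescopes to $[\text{end inside } C]-[\text{start inside } C]$.
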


\begin{proof}
By Lemma \ref{one II cycle}, $|{\rm flow}_{L}(C_{j},M_{1})-{\rm flow}_{L}(C_{j},M_{2})|=1$, $j=1,2$.
By the proof of Lemma \ref{one II cycle}, we know that ${\rm flow}_{L}(C_{j},M_{1})-{\rm flow}_{L}(C_{j},M_{2})$ is determined by the last edge $e_j$ of $C_{j}$  crossed by $L$  along the clockwise orientation of $C_{j}$.  Without loss of generality suppose that the start point of $L$ lies in the exterior of $C_1$. Suppose to the contrary that $C_{1}$ and $C_{2}$ are proper $M_{1}$-alternating cycles (the proof for improper case is similar).
If the white end-vertices of $e_{1},e_{2}$ are on the same side of $L$, then either $e_{1}$ and $e_{2}$  both are in $M_{1}$, or  both in $M_{2}$ (see Fig. \ref{Figure_5}(a) and (b)). We can see that ${\rm flow}_{L}(C_{1},M_{1})-{\rm flow}_{L}(C_{1},M_{2})=-1$ and ${\rm flow}_{L}(C_{2},M_{1})-{\rm flow}_{L}(C_{2},M_{2})=-1$. If the white end-vertices of two edges $e_{1},e_{2}$ are not on the same side of $L$, then one of them is in $M_{1}$, and the other one is in $M_{2}$ (see Fig. \ref{Figure_5}(c) and (d)). We also have ${\rm flow}_{L}(C_{1},M_{1})-{\rm flow}_{L}(C_{1},M_{2})={\rm flow}_{L}(C_{2},M_{1})-{\rm flow}_{L}(C_{2},M_{2})=-1$.
Hence ${\rm flow}_{L}(C_{1}\cup C_{2},M_{1})-{\rm flow}_{L}(C_{1}\cup C_{2},M_{2})=-2$, a contradiction.
\begin{figure}
\centering
\includegraphics[scale=0.8]{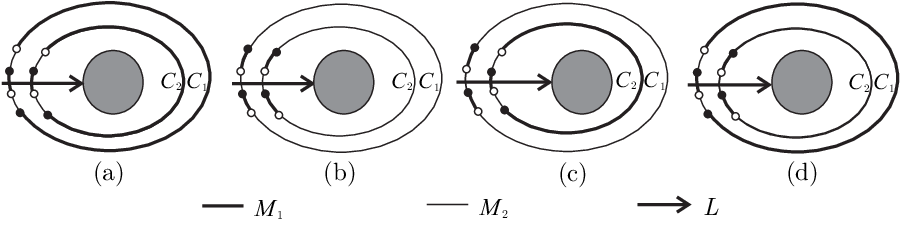}
\caption{\label{Figure_5} Illustration for Lemma \ref{two cycle} with its proof.}
\end{figure}
\end{proof}


Let $\mathcal{C}$ be the union of all $(M_{1},M_{2})$-alternating cycles of
$M_{1}\triangle M_{2}$ and $\mathcal{C'}$ be the union of all II-cycles of $M_{1}\triangle M_{2}$. Let $E(\mathcal{C})$ (resp. $E(\mathcal{C}')$)  be the  set of all edges in $\mathcal{C}$ (resp. $\mathcal{C}'$). We say that two perfect matchings $M_1$ and $M_2$ are {\sl balanced around a hole} $h$ of $G$ if among  $\mathcal{C}$ there are the same number of proper and improper $M_1$-alternating cycles which contain $h$ in their interior.
\begin{lem}\label{EC}
Let $M_{1}$, $M_{2}$ be  two perfect matchings of $G$ and \textcolor{black}{$L$ be a cut line of $G$}.
Then ${\rm flow}_{L}(G,M_{1})={\rm flow}_{L}(G,M_{2})$ if and only if ${\rm flow}_{L}(\mathcal{C'}, M_{1})={\rm flow}_{L}(\mathcal{C'},M_{2})$. Moreover, if ${\rm flow}_{L}(G,M_{1})={\rm flow}_{L}(G,M_{2})$, then $|L^{*}\cap E(\mathcal{C'})|$ is even.
\end{lem}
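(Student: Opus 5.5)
The plan is to decompose the flow of each matching over the edge set of $G$ into contributions that $M_1$ and $M_2$ share and contributions coming from the alternating cycles, and then apply Lemma \ref{one II cycle} cycle by cycle.

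\textbf{Step 1: reduce to $\mathcal{C}$.} Every edge of $G$ lies either in $M_1\cap M_2$, or in $E(\mathcal{C})$, or in neither matching. Edges of $M_1\cap M_2$ contribute equally to both flows, and edges in neither matching contribute nothing. Hence
\[
{\rm flow}_{L}(G,M_1)-{\rm flow}_{L}(G,M_2)={\rm flow}_{L}(\mathcal{C},M_1)-{\rm flow}_{L}(\mathcal{C},M_2).
\]
This is essentially the additivity formula following (\ref{rest}), applied with $G_1=\mathcal{C}$. The restriction of $M_j$ to $\mathcal{C}$ is a perfect matching of $\mathcal{C}$, and the remaining part $G-V(\mathcal{C})$ carries the common matching $M_1\cap M_2$.

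\textbf{Step 2: the I-cycles contribute nothing.} Split $\mathcal{C}$ into its disjoint cycles. The flow is additive over them. Take any I-cycle $C$ of $M_1\triangle M_2$. The endpoints of a cut line $L$ are vertices $h_i^*,h_j^*$ of non-hexagonal faces. An I-cycle contains no hole in its interior, and the exterior face always lies outside it. So both endpoints of $L$ lie in the exterior of $C$. Here one must check that $h_i^*$ is placed inside the face $h_i$, and that this face lies entirely on one side of $C$ because $C$ consists of edges of $G$. Lemma \ref{one II cycle} then gives ${\rm flow}_{L}(C,M_1)={\rm flow}_{L}(C,M_2)$.

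Summing over all I-cycles shows that the difference in Step 1 equals ${\rm flow}_{L}(\mathcal{C}',M_1)-{\rm flow}_{L}(\mathcal{C}',M_2)$. This proves the equivalence.

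\textbf{Step 3: parity.} Assume the flows are equal, and write $\mathcal{C}'=\bigcup_k C_k$. By Lemma \ref{one II cycle}, each $C_k$ has difference $d_k:=\pm1$ if $L$ separates the two sides of $C_k$ (exactly one endpoint of $L$ inside $C_k$), and $d_k=0$ otherwise. A Jordan-curve crossing argument, as used in that proof, shows that $|L^*\cap E(C_k)|$ is odd exactly when $d_k=\pm1$, and even exactly when $d_k=0$. Hence
\[
|L^*\cap E(\mathcal{C}')|\equiv \sum_k |d_k| \equiv \sum_k d_k \pmod 2 .
\]
By the equivalence already proved, $\sum_k d_k=0$. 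Therefore $|L^*\cap E(\mathcal{C}')|$ is even.

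The main point needing care is that $L$ is a cut line rather than a single cut segment. Its internal vertices may include vertices $h_k^*$ of other non-hexagonal faces, and the segment of $L$ inside such a face need not be straight. Lemma \ref{one II cycle} is stated for cut lines, so this is covered. I would nevertheless verify two things. First, the edges crossed by $L$ are exactly the dual crossings, so crossing parity with each cycle $C_k$ is governed only by the positions of the endpoints. Second, passing through intermediate hole vertices never crosses a cycle edge without this being recorded in $L^*$.
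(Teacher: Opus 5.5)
Your proposal is correct and follows the same route as the paper: you reduce to $\mathcal{C}$ via the shared edges, discard the I-cycles using Lemma \ref{one II cycle}, and get the parity from the fact that separating II-cycles have odd crossing number and flow differences of $\pm1$ that sum to zero. Your congruence $|L^*\cap E(\mathcal{C}')|\equiv\sum_k d_k \pmod 2$ is a compact restatement of the paper's observation that the number of separating cycles in $\mathcal{C}'$ must be even.
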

\begin{proof}
Since $M_{1}\triangle M_{2}=\mathcal{C}$, ${\rm flow}_{L}(G-V(\mathcal{C}),M_{1})={\rm flow}_{L}(G-V(\mathcal{C}),M_{2})$.
By ${\rm flow}_{L}(G,M_{i})=
{\rm flow}_{L}(G-V(\mathcal{C}), M_{i})+{\rm flow}_{L}(\mathcal{C}, M_{i})$, $i=1,2$, we have ${\rm flow}_{L}(G,M_{1})={\rm flow}_{L}(G,M_{2})$ if and only if
${\rm flow}_{L}(\mathcal{C},M_{1})={\rm flow}_{L}(\mathcal{C},M_{2})$.
For any I-cycle $C$ of $\mathcal{C}$,
since both two endpoints of $L$ are in the exterior of $C$, by Lemma \ref{one II cycle}, we have ${\rm flow}_{L}(C,M_{1})={\rm flow}_{L}(C,M_{2})$.
Thus, ${\rm flow}_{L}(G,M_{1})={\rm flow}_{L}(G,M_{2})$ if and only if ${\rm flow}_{L}(\mathcal{C'},M_{1})={\rm flow}_{L}(\mathcal{C'},M_{2})$.

If ${\rm flow}_{L}(G,M_{1})={\rm flow}_{L}(G,M_{2})$, then ${\rm flow}_{L}(\mathcal{C'},M_{1})-{\rm flow}_{L}(\mathcal{C'},M_{2})=0$. Let $\mathcal{C}'_{1}$ be the set of cycles $C$ in  $\mathcal{C'}$ such  that   the two endpoints of $L$ belong to   the exterior and interior of $C$ separately.  For any cycle $C$ of $\mathcal{C'}$ not in $\mathcal{C}'_{1}$,
by Lemma \ref {one II cycle},  $|L^{*}\cap E(C)|$ is even and ${\rm flow}_{L}(C,M_{1})={\rm flow}_{L}(C,M_{2})$.
Then  \begin{equation}\label{3.1}
    \sum\limits_{C\in \mathcal C_1'}({\rm flow}_{L}(C,M_{1})-{\rm flow}_{L}(C,M_{2}))=0.
\end{equation}
For $C\in \mathcal C_1'$, by Lemma \ref {one II cycle}, $|L^{*}\cap E(C)|$ is odd and $|{\rm flow}_{L}(C,M_{1})-{\rm flow}_{L}(C,M_{2})|=1$. The latter and Eq. (\ref{3.1}) imply that  $|\mathcal C'_1|$ is even. Moreover, 
$|L^{*}\cap E(\mathcal{C'})|=|L^{*}\cap E(\mathcal{C}'_{1})|+|L^{*} \cap E(\mathcal{C'}-\mathcal{C}'_{1})|$ is even.
\end{proof}


\begin{cor}\label{Necessity}
Let $G$ be a matchable coronoid system. If  perfect matchings $M_1$ and $M_2$ of $G$ lie in the same connected component of $R_{6}(G)$, then their flows across a cut segment $L$ are equal.
\end{cor}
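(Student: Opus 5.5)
The proof reduces to a single flip and then follows a path in $R_6(G)$. If $M_1$ and $M_2$ lie in the same component, there is a sequence $M_1=N_0,N_1,\ldots,N_k=M_2$ of perfect matchings in which consecutive ones are adjacent in $R_6(G)$. Equality of flows is transitive, so it suffices to show ${\rm flow}_{L}(G,N_{t-1})={\rm flow}_{L}(G,N_t)$ for each $t$.

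For a single edge of $R_6(G)$, the symmetric difference $N_{t-1}\triangle N_t$ is the boundary $C$ of one hexagon $h$ of $G$. This $C$ is the unique $(N_{t-1},N_t)$-alternating cycle, so $\mathcal{C}=C$. Moreover $C$ is an I-cycle, since the interior of a hexagon contains no hole.

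Next we check that both endpoints of $L$ lie outside $C$. By Definition~\ref{2.10}, the endpoints of the cut segment $L$ are the dual vertices $h_i^*,h_j^*$ of non-hexagonal faces. These vertices lie in the interiors of holes or of the exterior face, and hence in the exterior of $C$. Equivalently, $C$ contains no hole in its interior, so it is not one of the II-cycles counted in $\mathcal{C}'$. Therefore $\mathcal{C}'=\emptyset$.

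We now conclude with one of two routes. The direct route applies Lemma~\ref{one II cycle} to $C$, which gives ${\rm flow}_{L}(C,N_{t-1})={\rm flow}_{L}(C,N_t)$. The matchings $N_{t-1}$ and $N_t$ agree on $G-V(C)$, so the decomposition ${\rm flow}_{L}(G,M)={\rm flow}_{L}(G_1,M)+{\rm flow}_{L}(G-V(G_1),M)$ with $G_1=C$ gives equal flows on $G$. The alternative route applies Lemma~\ref{EC}: since $\mathcal{C}'$ is empty, ${\rm flow}_{L}(\mathcal{C}',N_{t-1})={\rm flow}_{L}(\mathcal{C}',N_t)=0$ holds trivially, and the lemma yields equality of the flows on $G$.

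A cut segment is a special case of a cut line, and Lemmas~\ref{one II cycle} and~\ref{EC} are stated for cut lines, so they apply to $L$. There is no real obstacle. The only point needing care is verifying that the endpoints of $L$, as dual vertices of non-hexagonal faces, are genuinely exterior to every hexagon boundary $C$, including the case where $L$ crosses edges of $h$ itself. This follows because $L$ passes through $h^*$ as an internal vertex and crosses exactly two edges of $C$ (an even number), entering and leaving the hexagon.
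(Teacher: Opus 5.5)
Your proposal is correct and follows essentially the same route as the paper. The paper also walks along a path in $R_6(G)$ and observes that each flip's symmetric difference is a hexagon, hence an I-cycle. It then applies Lemma~\ref{one II cycle} to get equal flow on that hexagon and lifts this to equal flow on $G$ via Lemma~\ref{EC}; your direct use of the flow decomposition is an equivalent substitute for that last step.
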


\begin{proof}
$R_6(G)$ has a path $P' = M_{1}'
(= M_1)M_{2}'\cdots M_{t-1}'M_{t}'(= M_{2})$
between $M_{1}$ and $M_{2}$. For any consecutive perfect matchings $M_i'$ and $M_{i+1}'$,   $i=1,2,\ldots,t-1$, $M_{i}{'}\triangle M_{i+1}'$ forms a hexagon $S$ of $G$, which is an I-cycle. \textcolor{black}{So by Lemma \ref{one II cycle}, we have ${\rm flow}_{L}(S, M_{i}')= {\rm flow}_{L}(S,M_{i+1}')$. By Lemma \ref{EC} we have ${\rm flow}_{L}(G, M_{i}')= {\rm flow}_{L}(G,M_{i+1}')$.} Hence
 ${\rm flow}_{L}(G,M_{1})= {\rm flow}_{L}(G,M_{2})$.\end{proof}

Corollary \ref{Necessity} shows the necessity of Theorem \ref{main results}.  To show its sufficiency, we first describe some basic results. \textcolor{black}{The following result can be obtained from the  proof of Lemma 3.3 in \cite{LZhang03}. Here we give a new and simpler proof.}
\begin{lem}\label{path}
\begin{color}{black}
Let $G$ be a plane elementary bipartite graph,  $C_0$ be the boundary of the exterior face of $G$ and $C_1,\ldots,C_n$  be the boundaries of interior faces of $G$.  Let  $M_{1}$ be a  perfect matching of $G$ such that $C_0$ is  a proper $M_{1}$-alternating cycle and $C_{1},\ldots,C_{n}$ are improper  $M_{1}$-alternating cycles. Then
 $R(G)$ has a path between $M_{1}$ and $M_{2}:=M_1\triangle \bigcup_{i=0}^n E(C_{i})$  by flipping only those faces of $G$ in the region $R$, where $R$ is obtained from the interior of  $C_{0}$ minus  $C_{i}$ ($1\leq i\leq n$) with their interiors.
\end{color}
\end{lem}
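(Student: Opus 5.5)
We argue by induction on the number of interior faces $n$ of $G$ that lie in the region $R$; equivalently, we use the ordering of faces by how deeply they sit inside $C_0$. The main tool is Lemma \ref{$M$-alternating face}, applied to suitable elementary subgraphs. First we note that $M_2$ is a perfect matching. The cycles $C_0,\dots,C_n$ are pairwise disjoint, since they are all $M_1$-alternating and each vertex has exactly one $M_1$-edge. Hence $E(C_0)\cup\cdots\cup E(C_n)$ is a disjoint union of $M_1$-alternating cycles, and taking the symmetric difference with $M_1$ gives a perfect matching $M_2$. In $M_2$, the cycle $C_0$ becomes improper and each $C_i$ with $i\ge1$ becomes proper.

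\textbf{Reduction to a single face flip.} Since $C_0$ is a proper $M_1$-alternating cycle and $G$ is elementary, Lemma \ref{$M$-alternating face} gives a proper $M_1$-alternating face $f$ inside $C_0$. This face cannot be any of $C_1,\dots,C_n$, because those are improper. So $f$ is a face of $G$ lying in $R$ that is not one of the prescribed boundaries. Flipping $f$ gives $M_1'=M_1\triangle E(f)$. We then try to apply induction to a smaller graph. The natural choice is to remove $f$ from consideration by viewing the flip as cutting $R$ along $f$. After the flip, $f$ is an improper $M_1'$-alternating cycle.

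Here the plan must be adjusted, because $f$ may share edges with $C_0$ or with some $C_i$. Those boundaries are then no longer $M_1'$-alternating, so the naive induction on the list of "interior boundaries" breaks. A cleaner potential function is needed. We use the standard height/distributive-lattice idea instead: compare $M_1$ with the target $M_2$. The symmetric difference $M_1\triangle M_2$ is exactly $\bigcup_i C_i$, which bounds the region $R$. We induct on the number of faces in $R$ enclosed by the symmetric difference, weighted appropriately; concretely, we induct on the number of faces of $R$.

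\textbf{Inductive step.} Take the proper $M_1$-alternating face $f\subseteq R$ found above and set $M_1'=M_1\triangle E(f)$. Then $M_1'\triangle M_2$ is the symmetric difference of the boundaries of $R$ and of $f$, which is the edge set of the boundary of the region $R'=R\setminus f$. This boundary decomposes into disjoint $(M_1',M_2)$-alternating cycles. We take the subgraph $G'$ consisting of the closure of each component of $R'$. For each component, its outer boundary must be proper $M_1'$-alternating and its inner boundaries improper; we check this orientation by tracking which side of each boundary edge the region lies on, since properness is determined by the white-to-black direction relative to the region. The subgraph $I[D]$ bounded by the outer cycle $D$ of each component is elementary by Lemma \ref{I[C] elementary}, provided $D$ is nice in $G$. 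It is nice because it is $M_1'$-alternating. Removing the interiors of the inner cycles, each of which is also nice, should preserve elementarity, again using Lemma \ref{I[C] elementary} applied to $O[\cdot]$ within $I[D]$. The components of $R'$ have fewer faces, so by induction each can be transformed independently. Concatenating the flip of $f$ with these paths gives the required path in $R(G)$, and every face flipped lies in $R$.

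\textbf{Expected main obstacle.} The hardest step is verifying that, after flipping $f$, the region $R'$ again satisfies the hypotheses: its boundary cycles are disjoint, the outer boundary of each component is proper while the inner ones are improper, and the resulting subgraph is elementary. Disjointness can fail when $f$ touches a boundary in a vertex but not along an edge. In that case I would first pick $f$ more carefully. One option is to choose a proper $M_1$-alternating face sharing no edge with $C_0$ when possible. Alternatively, I would drop the elementarity requirement and work directly in $G$. Then I would apply Lemma \ref{$M$-alternating face} only to $I[D]$, which is elementary by Lemma \ref{I[C] elementary}, and argue that a proper alternating face of $I[D]$ that is not inside an improper inner boundary exists. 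This last claim would be obtained by applying the lemma to the cycle $D$ and using that faces inside the improper $C_i$ cannot be proper-alternating across their boundary.
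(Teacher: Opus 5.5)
Your proposal is correct and is essentially the paper's proof. Both argue by induction on the number of faces in $R$: flip a proper $M_1$-alternating face $f$ supplied by the lemma on alternating faces inside a proper alternating cycle, then recurse on the components of $R\setminus f$, whose outer and inner boundaries (the cycles of $M_1'\triangle M_2$) are proper and improper $M_1'$-alternating; you additionally make explicit, via Lemma~\ref{I[C] elementary}, why each piece ($I[D]$ minus the interiors of its inner boundaries) is elementary, which the paper leaves implicit. Two small points: the worry in your last paragraph is unfounded, since the boundary of $R'$ is $M_1'\triangle M_2$ and hence automatically a disjoint union of cycles, so no fallback is needed; and your one-line disjointness argument for $C_0,\dots,C_n$ is incomplete, because sharing a vertex only forces sharing its $M_1$-edge $e$, and you must add that two interior faces on either side of $e$ traverse it in opposite clockwise directions (so cannot both be improper), while $C_0$ and an interior face $C_i$ traverse it in the same direction (so cannot be proper and improper respectively).
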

\begin{proof}
\begin{color}{black}We can see that the $C_i$'s are mutually disjoint by the condition in the lemma, which  shows that  $R$ is indeed a region. Let $s$ be the number of faces of $G$  lying  in   $R$.  We prove, by induction on $s$, that  $R(G)$ has a path between $M_{1}$ and $M_{2}$ by flipping only those  faces of $G$  in the region $R$. If $s=1$, then $G=C_{0}$ and it is trivial.

Suppose $s\geq 2$. Since $C_{0}$ is a proper $M_{1}$-alternating cycle of $G$,
by Lemma \ref{$M$-alternating face}, $G$ has a proper $M_{1}$-alternating face $f$. Hence $f$ lies in the interior of $C_0$ and on the exterior of each $C_i$, $1\leq i\leq n$. Let  $C$ be the boundary of $f$ and $R'$ the point set obtained from $R$ by deleting  $f$ and its boundary. Note that $C$ may share an edge of some $C_i$'s. Then $R'$ is either a region or the union of several separated regions and  $(\cup_{i=0}^n E(C_{i}))\triangle E(C)$ form the entire boundary of $R'$.  
Let $M_{1}':=M_{1}\triangle E(C)$. Then $M_{1}'$ is a perfect matching of $G$ and
$M_{1}'\triangle M_{2}=M_{1}\triangle E(C) \triangle M_{2}=(\cup_{i=0}^n E(C_{i}))\triangle E(C)$. For each component (region) of $R'$, the outer boundary and the inner boundaries are  proper and improper $M'_1$-alternating cycles (improper and proper $M_2$-alternating cycles),  respectively.

By using repeatedly the induction hypothesis, $R(G)$ has a path $P$ between $M_{1}'$ and $M_{2}$ only by flipping faces in  $R'$. Since $M_{1}':=M_{1}\triangle E(C)$, $R(G)$ has an edge  between $M_{1}$ and $M_{1}'$ by flipping face $f$. Thus $R(G)$ has a path  between $M_{1}$ and $M_{2}$ by flipping faces in the region $R$.
\end{color}
\end{proof}


We are in position to show the sufficiency of Theorem \ref{main results} (Lemma \ref{Sufficiency} ) by using the above lemmas.
\begin{lem}\label{Sufficiency}
Let $G$ be a matchable coronoid system with $n$ holes. 
For  perfect matchings  $M_{1}$ and $M_{2}$ of $G$, if ${\rm flow}_{L_{i}}(G,M_{1})= {\rm flow}_{L_{i}}(G,M_{2})$ for each cut segment $L_{i}$ ($1\leq i\leq n$) of  chosen $n$ cut segments,
then $R_{6}(G)$ has a path between $M_{1}$ and $M_{2}$.
\end{lem}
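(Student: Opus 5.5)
We proceed by induction on the number of cycles in $M_1\triangle M_2$, or more precisely on $|M_1\triangle M_2|$. Write $\mathcal{C}=M_1\triangle M_2$ as a disjoint union of $(M_1,M_2)$-alternating cycles, split into I-cycles and II-cycles ($\mathcal{C}'$). The plan is to show that we can always find either a single cycle of $\mathcal{C}$, or a pair of nested cycles of $\mathcal{C}$, which we can "flip away" by a path in $R_6(G)$. The result is a perfect matching $M_1'$ in the same component as $M_1$ with $|M_1'\triangle M_2|<|M_1\triangle M_2|$. By Corollary \ref{Necessity}, $M_1'$ still has the same flows as $M_2$ across every $L_i$, so induction applies.

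\textbf{Step 1: I-cycles.} Let $C$ be an I-cycle of $\mathcal{C}$. Then $I[C]$ contains no hole, so $I[C]$ is a hexagonal system, and it is matchable (by $M_1|_{I[C]}$). By Lemma \ref{hexagonal system}, $R_6(I[C])$ is connected. Hence $M_1|_{I[C]}$ can be transformed into $(M_1\triangle E(C))|_{I[C]}$ by hexagon flips inside $I[C]$, keeping $M_1$ fixed outside. Alternatively, $I[C]$ with $C$ nice is elementary by Lemma \ref{I[C] elementary} (using Lemma \ref{component} to handle components), and Lemma \ref{path} with $n=0$ gives the flips directly. This removes $C$ from the symmetric difference. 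So we may assume $\mathcal{C}=\mathcal{C}'$ consists only of II-cycles.

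\textbf{Step 2: pairing II-cycles.} Pick a II-cycle $C_1$ that is innermost, i.e. no other cycle of $\mathcal{C}'$ lies inside it. The holes inside $C_1$ form a nonempty set $H$. In the rooted tree $T$, take a cut line $L$ from a hole in $H$ to the exterior face $h_0$ (or to some face outside $C_1$). This line is a concatenation of chosen segments, so its flow difference is a signed sum of the flow differences of the $L_i$, all of which are zero. Thus $\mathrm{flow}_L(G,M_1)=\mathrm{flow}_L(G,M_2)$. By Lemma \ref{EC}, the number of cycles of $\mathcal{C}'$ separating the endpoints of $L$ is even, and their contributions $\pm1$ cancel. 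Among these cycles, $C_1$ is one. Using the flows along cut lines between holes, we aim to show that $M_1,M_2$ are balanced around each hole. The signs in Lemma \ref{one II cycle} are determined by properness, as computed in the proof of Lemma \ref{two cycle}. Hence a cycle $C_2\supset C_1$ of opposite orientation exists, and we choose $C_2$ to be the \emph{next} cycle of $\mathcal{C}'$ enclosing exactly the same hole set. If the nesting is not a chain (cycles enclosing different hole subsets), we first argue via cut lines between holes inside and outside the relevant cycles that the innermost opposite-orientation partner can be taken adjacent in the nesting order. After possibly swapping the roles of $M_1$ and $M_2$, $C_2$ is proper and $C_1$ improper $M_1$-alternating, as in Lemma \ref{two cycle}.

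\textbf{Step 3: flipping the annulus.} Let $A$ be the subgraph of $G$ between $C_2$ and $C_1$, i.e. $I[C_2]$ minus the interior of $C_1$. By the choice of $C_1,C_2$, the region between them contains no holes and no other II-cycles. After Step 1 it contains no I-cycles of $\mathcal{C}$ either, so every interior face of $A$ is a hexagon. Restrict $M_1$ to $A$; it is a perfect matching of $A$ since $C_1,C_2$ are $M_1$-alternating. We apply Lemma \ref{component} to reduce to the elementary component of $A$ containing $C_1\cup C_2$. This component should contain both cycles because they are resonant boundaries; here we would use Lemmas \ref{nice cycle} and \ref{I[C] elementary}. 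The remaining components are left unchanged. Lemma \ref{path} then yields flips of faces in the annular region, which are hexagons of $G$, transforming $M_1$ into $M_1\triangle(E(C_1)\cup E(C_2))$. This reduces $\mathcal{C}'$ by two cycles.

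The main obstacle is Step 2: deducing from equality of flows along only the $n$ chosen segments that the II-cycles can be paired as properly nested, oppositely oriented, consecutive pairs. Equality on $L_i$ controls only the signed count of cycles separating the two endpoints of $L_i$. To handle this, I would first derive a per-hole balance by induction over the tree $T$, from the leaves to the root: the flow difference across $L_i$ equals the signed count of cycles separating $h_i$ from its parent face. Then I would use planarity, since disjoint cycles are nested or disjoint-interior, to peel off an innermost cycle together with its nearest opposite-signed enclosing partner containing the same holes.
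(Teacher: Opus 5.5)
There is a genuine gap in Steps 2--3. Your induction removes cycles of $\mathcal{C}$ at most two at a time: it pairs an innermost II-cycle $C_1$ with an oppositely oriented $C_2\supset C_1$ that encloses exactly the same holes. Such a partner need not exist, and the sentence about ``cut lines between holes inside and outside the relevant cycles'' cannot produce one.

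Concretely, let $\mathcal{C}=\{D,C_1,C_2\}$. Here $D$ is proper $M_1$-alternating and encloses exactly the holes $h_1,h_2$, while $C_1,C_2\subset I[D]$ are improper $M_1$-alternating and enclose only $h_1$ and only $h_2$, respectively. This occurs, e.g., in a catacondensed coronoid with two holes, taking $D,C_1,C_2$ to be its three boundary cycles. These cover all vertices, so every choice of orientations extends to a perfect matching. Both holes are balanced. The sign computation behind Lemma~\ref{two cycle} shows that, across any cut line joining two of $h_0,h_1,h_2$, the $\pm1$ contributions of the two separating cycles cancel, so the hypothesis of the lemma holds.

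Yet the only cycle enclosing $C_1$ is $D$, which also encloses $h_2$, so your pair does not exist. Worse, no nonempty proper subset of $\mathcal{C}$ can be flipped away. For instance, for $M_1\triangle E(D)\triangle E(C_1)$, only $D$ separates the ends of the cut line in $T$ from $h_0^*$ to $h_2^*$. So its flow there differs from that of $M_1$, and by Corollary~\ref{Necessity} it lies in a different component of $R_6(G)$. Hence no scheme that peels off cycles one or two at a time can work.

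The missing idea is to flip one cycle together with \emph{several} inner cycles simultaneously. The paper starts from an \emph{outermost} II-cycle $C_1$ (say proper) and takes \emph{all} maximal improper cycles $C_{11},\dots,C_{1r}$ of $\mathcal{C}$ inside it. The balance claim, which your tree argument does give, shows that the region between $C_1$ and the $C_{1j}$'s contains no hole: a hole there would be enclosed by the proper $C_1$ but by no improper cycle. So $G_1$, defined as $I[C_1]$ minus the interiors of the $C_{1j}$'s, is a coronoid system by Lemma~\ref{equivalent}. All its non-hexagonal faces are $M_1$-alternating, so it is elementary by Lemmas~\ref{elementary} and \ref{nice cycle}. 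Lemma~\ref{path}, applied with $r\ge 1$ inner boundaries, then realizes $M_1\mapsto M_1\triangle E(C_1\cup C_{11}\cup\cdots\cup C_{1r})$ by hexagon flips, and induction on the number of cycles finishes.

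Separately, your assertion in Step 3 that the annulus contains no other II-cycle of $\mathcal{C}$ is false in general; consider the pattern proper, proper, improper, improper around a single hole. This is harmless, since Lemma~\ref{path} does not need it.
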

\begin{proof}
Let $\mathcal{C}=M_{1}\triangle M_{2}=\cup_{i=1}^k C_{i}$, $k\geq0$,
where the $C_{i}$'s are  disjoint $(M_1,M_2)$-alternating cycles.
We now
show that $R_{6}(G)$ has a path between $M_{1}$ and $M_{2}$ by induction on the number $k$. If $k=0$, it is trivial. Next, let $k\geq 1$. If $\mathcal{C}$ contains an I-cycle $C$, let $M_{3}=M_{1}\triangle E(C)$.
Since $C$ is an I-cycle, $I[C]$ is a hexagonal system. Then $M_{3}':= M_3|_{I[C]}$ and $M_{1}':= M_1|_{I[C]}$ are two perfect matchings of $I[C]$.
By Lemma \ref{hexagonal system}, $R(I[C])$  has a path
$M_{1}'M_{4}'\cdots M_{t-1}'M_{t}'M_{3}'$.
Let  $M=M_{1}-M_{1}'$. Then
 $P': = (M_{1}=M_{1}'\cup M)(M_{4}'\cup M)\cdots
(M_{t-1}'\cup M)(M_{t}'\cup M)(M_{3}'\cup M)$ is a path between $M_{1}$ and
$M_{3}$ in $R_{6}(G)$.
By Corollary \ref{Necessity}, ${\rm flow}_{L_{i}}(G,M_{3})= {\rm flow}_{L_{i}}(G,M_{1})={\rm flow}_{L_{i}}(G,M_{2})$.  Since $M_{3}\triangle M_{2}=\mathcal{C}\setminus\{C\}$, by the induction hypothesis, $R_{6}(G)$ has a path between $M_{3}$ and $M_{2}$. Thus $R_{6}(G)$ has a path between $M_1$ and
$M_{2}$.

Next suppose that $\mathcal{C}$  contains no I-cycles. We obtain the following critical claim.
\begin{claim}\label{balance}
$M_{1}$ and $M_{2}$ are balanced around each hole of $G$.
\end{claim}

\begin{proof}
Let $h$ be any hole of $G$. \textcolor{black}{By Lemma \ref{tree}, the  rooted tree $T$ in $G^{*}$  contains a directed path $L$ from $h_0^*$ to $h^*$, which is a cut line of $G$, and  $L$  can be divided into several cut segments among  the chosen $n$ cut segments, say $L_i$ in turn, $1\leq i \leq m\leq n$
So $L=L_{1}\cup L_{2}\cup\cdots\cup L_{m-1}\cup L_{m}$, $1\leq m\leq n$, where $h_{0}^{*}$ is a start point of $L_{1}$, $h^*$ is an endpoint  of $L_{m}$ and $L_{i}\cap L_{i+1}=h_{i}^{*}, i=1,\ldots,m-1$ (see Fig. \ref{6-balance}).} \begin{figure}
\centering
\includegraphics{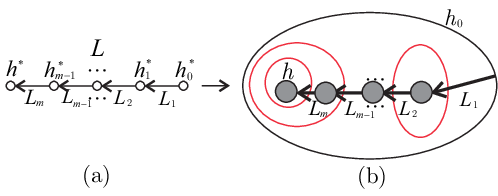}
\caption{\label{6-balance} (a) A cut line  $L$ of  $G$; (b) cut segments of $G$ corresponding to $L$ and some  $(M_{1},M_{2})$-alternating cycles (black).}
\end{figure}
Since ${\rm flow}_{L_i}(G,M_{1})= {\rm flow}_{L_i}(G,M_{2})$ for each $i$, we have
\begin{equation}
   {\rm flow}_{L}(G,M_{1})= \sum_{i=1}^m{\rm flow}_{L_i}(G,M_{1})=\sum_{i=1}^m {\rm flow}_{L_i}(G,M_{2})={\rm flow}_{L}(G,M_{2}).
\end{equation}
By Lemma \ref{EC}, we have ${\rm flow}_{L}(\mathcal{C},M_{1})={\rm flow}_{L}(\mathcal{C},M_{2})$. That is,  
 \begin{equation}\label{3.4}
    {\rm flow}_{L}(\mathcal{C},M_{1})-{\rm flow}_{L}(\mathcal{C},M_{2})=0.
    \end{equation}
Let $C_{1},\ldots,C_{s}$ ($0\leq s\leq k$) be the cycles of $\mathcal{C}$ such that their interiors contain hole $h$ Then the other cycles $C_{s+1},\ldots,C_{k}$ in $\mathcal{C}$  contain no hole $h$ in their interiors. Since both endpoints of $L$ are in the exterior of $C_i$ for $s+1\leq i\leq k$, by Lemma \ref{one II cycle} we have ${\rm flow}_{L}(C_i,M_{1})-{\rm flow}_{L}(C_i,M_{2})=0,$ which together with Eq. (\ref{3.4}) yield

\begin{equation}\label{3.5}
    \sum\limits_{i=1}^{s}({\rm flow}_{L}(C_i,M_{1})-{\rm flow}_{L}(C_i,M_{2}))=0.\end{equation}
Further, by Lemma \ref{one II cycle}, $|{\rm flow}_{L}(C_{j},M_{1})-{\rm flow}_{L}(C_{j},M_{2})|=1$ for $1\leq j\leq s$. So Eq. (\ref{3.5}) implies that
 $s$ is even and a half of cycles of $C_{1},\ldots,C_{s}$ satisfy ${\rm flow}_{L}(C_{j},M_{1})-{\rm flow}_{L}(C_{j},M_{2})=1$, the other half of cycles satisfy ${\rm flow}_{L}(C_{j},M_{1})-{\rm flow}_{L}(C_{j},M_{2})=-1$. By Lemma \ref{two cycle}, a half of cycles of $C_{1},\ldots,C_{s}$ are proper $M_{1}$ (improper $M_{2}$)-alternating cycles and the other  half of cycles are improper $M_{1}$ (proper $M_{2}$)-alternating cycles.
Then $M_{1}$ and $M_{2}$ are balanced around hole $h$ and Claim 1 holds.
\end{proof}

Take all maximal II-cycles  $C_{1},C_{2},\ldots,C_{t}$,  $1\leq t\leq n$, of $\mathcal{C}$ (not contained in the interior of any other cycle of $\mathcal{C}$).  Without loss of generality, suppose $C_{1}$ is a proper $M_{1}$ (improper $M_{2}$)-alternating cycle.

\begin{claim} $I[C_{1}]$ contains an improper $M_{1}$ (proper $M_{2}$)-alternating cycle of $\mathcal{C}$.
\end{claim}
\begin{proof}
Suppose to the contrary that
 $I[C_{1}]$ has no improper $M_{1}$ (proper $M_{2}$)-alternating cycle in $\mathcal C$.   Since $C_{1}$ is a   II-cycle, $I[C_{1}]$ contains at least  a hole $h$. Then there is no improper $M_{1}$ (proper $M_{2}$)-alternating cycle in $\mathcal C$ with hole $h$ in its interior.
As $C_{1}$ is a proper $M_{1}$ (improper $M_{2}$)-alternating cycle, $M_{1}$ and $M_{2}$ are not balanced around $h$.
This is a contradiction to Claim \ref{balance}.
\end{proof}
\begin{figure}
\centering
\includegraphics[scale=0.8]{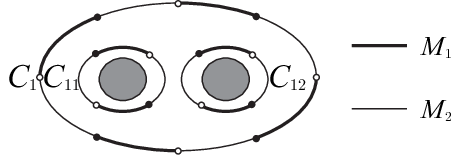}
\caption{\label{Figure_7}  A proper (resp. improper) $M_{1}$-alternating cycles $C_{1}$ (resp. $C_{11}$ and $C_{12}$).}
\end{figure}
By Claim 2, take all maximal improper $M_{1}$ (proper $M_{2}$)-alternating cycles
$C_{11},\ldots,C_{1r}$ ($r\geq 1$) of $\mathcal{C}$ in $I[C_{1}]$.  Then $C_1$ and the $C_{1j}$'s are mutually disjoint, and  the the \textcolor{black}{multiple} connected region \textcolor{black}{between} $C_1$ and such the $C_{1j}$'s contains no  holes of $G$ (see Fig. \ref{Figure_7}). Otherwise, a contradiction to Claim 1 happens.
Let $G_{1}$ be a subgraph of $G$ obtained from $I[C_{1}]$ by deleting the interiors  of $C_{11}, C_{12}, \ldots, C_{1r}$. 
\textcolor{black}{By Lemma \ref{equivalent}}, $G_{1}$ is a coronoid system with boundary  $\mathcal{C}_{1}':= C_{1}\cup C_{11}\cup\cdots \cup C_{1r}$.
Let $M_{1}'=M_{1}\triangle E(\mathcal{C}_{1}')$ and $M_{1}^{*}=M_{1}|_{G_{1}}$, $M_{2}^{*}=M_{1}'|_{G_{1}}$. Then $M_{1}^{*}\triangle M_{2}^{*}=\mathcal{C}_{1}'$. Lemmas \ref{nice cycle} and \ref{elementary} imply that $G_{1}$ is an elementary coronoid system. \textcolor{black}{By Lemma \ref{path}, $R_6(G_{1})$ has a path between $M_{1}^{*}$ and  $M_{2}^{*}$.} 
By the same manner as the beginning of this proof,   $R_{6}(G)$ has a path between  $M_{1}$ and  $M_{1}'$. By Corollary \ref{Necessity},
${\rm flow}_{L_{i}}(G,M_{1}')= {\rm flow}_{L_{i}}(G,M_{1})={\rm flow}_{L_{i}}(G,M_{2})$.
Since $M_{1}'\triangle M_{2} =\mathcal{C}\setminus\mathcal{C}_{1}'$, by induction hypothesis, $R_{6}(G)$ have a path between $M_{1}'$ and $M_{2}$.
Thus, $R_{6}(G)$ has a path between $M_1$ and $M_{2}$.
\end{proof}

\begin{color}{black}
\begin{rem}
{\rm If we can find $n$ pairwise internally disjoint cut segments in rooted tree $T$ (the  branch vertices  correspond only to non-hexagonal faces), then Theorem \ref{main results} remains valid when their orientations may be arbitrarily  assigned  (for example, see Fig. \ref{Figure_13}). To this end  we only need to make some minor changes in    the proof (Claim 1) of Lemma \ref{Sufficiency}: a  path $L$ in $T$ between  $h_0^*$ to $h^*$ consists of  cut segments among  the chosen $n$ cut segments,  reversing  the directions of some cut segments to get a directed path (cut line).  The following  Lemma \ref{anti-oriented} shows that the flows of two perfect matchings remains the same  across a pair of opposite  cut segments.   However, it is not clear at present whether such disjoint cut segments always exist.}
\end{rem}
\end{color}

\begin{figure}
\centering
\includegraphics[scale=0.8]{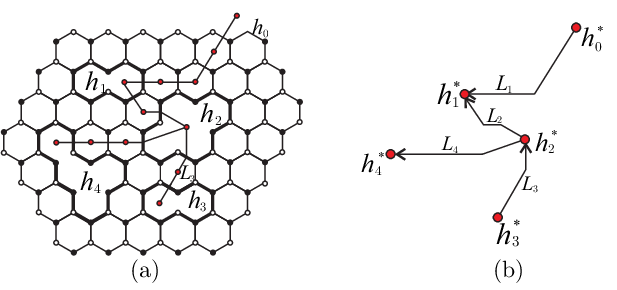}
\caption{\label{Figure_13} (a) A coronoid system $G$ and a tree of dual graph $G^{*}$, (b) four chosen internally disjoint cut segments $L_{1}, L_{2},L_{3}$ and $L_{4}$. }
\end{figure}

\begin{lem}\label{anti-oriented}
Let $L$ be a cut segment of $G$ and \textcolor{black}{$L'$ a cut segment obtained from $L$  in opposite direction.}
If ${\rm flow}_{L}(G,M_{1})= {\rm flow}_{L}(G,M_{2})$,
then ${\rm flow}_{L'}(G,M_{1})= {\rm flow}_{L'}(G,M_{2})$.
\end{lem}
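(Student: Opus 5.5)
The plan is to show directly that reversing the direction of a cut segment negates the flow of every perfect matching. The statement then follows immediately. Let $L'$ be $L$ traversed in the opposite direction. The two segments have the same underlying path in $G^*$, so they cross exactly the same edges of $G$; that is, $L'^{*}=L^{*}$ as edge sets.

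The only thing that changes under reversal is orientation. For each edge $e\in L^{*}$, the white end-vertex of $e$ lies to the left of $L$ exactly when it lies to the right of $L'$. This holds because $e^{*}$ crosses $e$ transversally at its midpoint. Moving along the segment in the opposite direction swaps the two sides locally near that crossing. Hence $(L'^{*})^{+}=(L^{*})^{-}$ and $(L'^{*})^{-}=(L^{*})^{+}$.

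Substituting these identities into the defining equation (\ref{dif}) gives, for every perfect matching $M$ of $G$,
\[
{\rm flow}_{L'}(G,M)=|(L^{*})^{-}\cap M|-|(L^{*})^{+}\cap M|=-{\rm flow}_{L}(G,M).
\]
Applying this with $M=M_1$ and with $M=M_2$, the hypothesis ${\rm flow}_{L}(G,M_1)={\rm flow}_{L}(G,M_2)$ becomes ${\rm flow}_{L'}(G,M_1)={\rm flow}_{L'}(G,M_2)$.

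The main point needing care is the side-swapping claim: left and right must be well defined edge by edge. Edges of $G^*$ may bend inside non-hexagonal faces, but the sides are only ever evaluated locally where $L$ crosses an edge $e\in L^*$. That crossing lies in the straight part of $e^{*}$ at the midpoint of $e$, so the local left/right determination is unambiguous and is exactly reversed by reversing the direction of travel.
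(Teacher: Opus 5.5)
Your proposal is correct and matches the paper's proof: reversing $L$ swaps $(L^{*})^{+}$ and $(L^{*})^{-}$, so Eq.~(\ref{dif}) gives ${\rm flow}_{L'}(G,M_i)=-{\rm flow}_{L}(G,M_i)$ for $i=1,2$, and the conclusion follows. Your justification of the local side-swap at each crossing point is detail the paper leaves implicit.
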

\begin{proof}
Suppose ${\rm flow}_{L}(G,M_{1})= {\rm flow}_{L}(G,M_{2})$.
Since $L'$ is in the opposite direction to $L$, $(L'^{*})^{+}=(L^{*})^{-}$ and $(L'^{*})^{-}=(L^{*})^{+}$. By Eq. (\ref{dif}), we have ${\rm flow}_{L'}(G,M_{i})=|(L^{*})^{-}\cap M_{i}|-|(L^{*})^{+}\cap M_{i}|=-{\rm flow}_{L}(G,M_{i})$, $i=1,2$.
Therefore, ${\rm flow}_{L'}(G,M_{1})= {\rm flow}_{L'}(G,M_{2})$.
\end{proof}

\section{Applications}

As  applications of Theorem \ref{main results} we first give a criterion for \textcolor{black}{the connectedness of the resonance graph of a coronoid system. Further, we discuss the resonance graph of a nanotube.}

\subsection{\textcolor{black}{Coronoid systems}}

By Theorem \ref{main results}, we can obtain a simple criterion to determine whether
the resonance graph of a coronoid system is connected  as follows.

\begin{thm}{\label{connected}}
Let $G$ be a matchable coronoid system. Then $R_{6}(G)$ is connected if and only if each nice cycle of $G$ is an I-cycle.
\end{thm}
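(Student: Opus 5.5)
Both directions will be derived from Theorem \ref{main results} together with Lemma \ref{one II cycle}: connectivity of $R_6(G)$ is equivalent to every pair of perfect matchings having equal flows across each of the chosen cut segments $L_1,\dots,L_n$.

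\emph{Necessity.} Suppose $R_6(G)$ is connected, and suppose to the contrary that some nice cycle $C$ of $G$ is a II-cycle. Then we have a perfect matching $M_1$ with $C$ $M_1$-alternating; set $M_2=M_1\triangle E(C)$. The interior of $C$ contains some hole $h$, while the exterior face $h_0$ lies outside $C$. Let $L$ be the cut line of the rooted tree $T$ from $h_0^*$ to $h^*$. Its endpoints are separated by $C$, so by Lemma \ref{one II cycle} we get $|{\rm flow}_L(C,M_1)-{\rm flow}_L(C,M_2)|=1$. Since $M_1$ and $M_2$ agree off $C$, this gives ${\rm flow}_L(G,M_1)\neq{\rm flow}_L(G,M_2)$. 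As in Claim \ref{balance}, $L$ is a concatenation of chosen cut segments $L_{i_1},\dots,L_{i_m}$, and ${\rm flow}_L$ is the sum of the flows across these segments. Hence some chosen segment $L_i$ has ${\rm flow}_{L_i}(G,M_1)\neq{\rm flow}_{L_i}(G,M_2)$. By Theorem \ref{main results} (or directly by Corollary \ref{Necessity}), $M_1$ and $M_2$ then lie in different components, which is a contradiction.

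\emph{Sufficiency.} Suppose every nice cycle is an I-cycle, and let $M_1,M_2$ be any two perfect matchings. Every cycle of $M_1\triangle M_2$ is $M_1$-alternating, hence nice, hence an I-cycle. For an I-cycle $C$ and any cut segment $L_i$, both endpoints of $L_i$ (which are non-hexagonal faces) lie outside $C$, since no hole is inside $C$ and $h_0$ is exterior. So Lemma \ref{one II cycle} gives equal flows of $M_1$ and $M_2$ on each such cycle. Outside $M_1\triangle M_2$ the two matchings coincide. Summing, as in the proof of Lemma \ref{EC}, yields ${\rm flow}_{L_i}(G,M_1)={\rm flow}_{L_i}(G,M_2)$ for all $i$. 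Theorem \ref{main results} then puts $M_1$ and $M_2$ in the same component, so $R_6(G)$ is connected. Alternatively, the I-cycle case of the proof of Lemma \ref{Sufficiency} already gives this by induction, using Lemma \ref{hexagonal system} on each $I[C]$.

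The only delicate point is in the necessity direction: we need a chosen cut segment, not merely the cut line, to detect the flow change. This requires that the cut line from $h_0^*$ to $h^*$ decomposes exactly into chosen segments with additive flows. That decomposition is the one already used in Claim \ref{balance}, so it can be taken over directly.
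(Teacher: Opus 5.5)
Your proof is correct and follows the paper's argument in both directions. For necessity you use a nice II-cycle $C$, $M_2=M_1\triangle E(C)$, Lemma~\ref{one II cycle} and flow invariance under flips; for sufficiency you use Lemma~\ref{one II cycle}, the summation of Lemma~\ref{EC}, and Theorem~\ref{main results}. The only difference is in necessity: the paper simply asserts that some chosen cut segment has one endpoint inside and one outside $C$, whereas you obtain a chosen segment with unequal flows by additivity along the cut line from $h_0^*$ to $h^*$, which is an equally valid and slightly more explicit justification.
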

\begin{proof}
Necessity: Assume $R_{6}(G)$ is connected. Suppose, to the contrary, that
there exists a nice cycle $C$ that is a II-cycle. Then $G$ has a perfect matching $M_{1}$ such that $C$ is $M_{1}$-alternating. Let $M_{2}:=M_{1}\triangle E(C)$. Then $M_{2}$ \textcolor{black}{is also} a perfect matching of $G$, and
there exists a cut segment $L$ of the  $n$ chosen cut segments joining an end point in the interior of $C$ and other an end point in the exterior of $C$. By Lemma \ref{one II cycle},
${\rm flow}_{L}(C,M_{1})\neq {\rm flow}_{L}(C,M_{2})$. By Lemma \ref{EC}, ${\rm flow}_{L}(G,M_{1})\neq {\rm flow}_{L}(G,M_{2})$.
By Theorem \ref{main results}, $M_{1}$ and
$M_{2}$ are not in the same connected component, a contradiction.

Sufficiency: Assume $M$, $M'$ are any two perfect matchings of $G$ and  $\mathcal{C}=M\triangle M'=C_{1}\cup\cdots\cup C_{k}$. Such $C_{i}$'s are I-cycles as each nice cycle of $G$ is an I-cycle.
So both end points of any cut segment $L$ are in the exterior of $C_{i}$. By Lemma \ref{one II cycle},
${\rm flow}_{L}(C_{i}, M)={\rm flow}_{L}(C_{i}, M')$. 
By Lemma \ref{EC}, ${\rm flow}_{L}(G, M)={\rm flow}_{L}(G,M')$.
Theorem \ref{main results} shows that  $M$ and $M'$ lie in connected component of $R_{6}(G)$. Hence $R_{6}(G)$ is connected.
\end{proof}

For an elementary coronoid system $G$, the boundary of each hole of $G$ is a nice II-cycle by Lemma \ref{elementary}. So by Theorem \ref{connected} we have an immediate consequence  as follows.
\begin{cor}\label{coronoid}
Let $G$ be \textcolor{black}{an} elementary coronoid system. Then $R_{6}(G)$ is disconnected.
\end{cor}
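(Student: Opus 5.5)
The plan is to apply Theorem \ref{connected} directly. It suffices to find one nice cycle of $G$ that is a II-cycle, since then $R_6(G)$ cannot be connected. A coronoid system has at least one hole $h_1$ (by definition it contains a non-hexagonal interior face). Let $C$ be the boundary of $h_1$. It then remains to check two things: that $C$ is a II-cycle, and that $C$ is nice.

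\textbf{$C$ is a II-cycle.} A II-cycle is a cycle whose interior contains a hole of $G$. I read ``interior'' here as the closed region $I[C]$, so that the hole $h_1$ itself lies in the interior of its own boundary. This is consistent with the paper's remark that the boundaries of holes and of the infinite face are II-cycles. Under this reading the check is immediate.

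\textbf{$C$ is nice.} Since $G$ is elementary and a plane bipartite graph with more than two vertices, Lemma \ref{nice cycle} says every face of $G$ is resonant. In particular the hole $h_1$ is a resonant face, so its boundary $C$ is $M$-alternating for some perfect matching $M$. Equivalently, $C$ is nice. (Lemma \ref{elementary} gives the same conclusion, as the paper notes.) Elementarity also makes $G$ matchable, so the hypotheses of Theorem \ref{connected} hold.

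\textbf{Conclusion.} We now have a nice cycle $C$ that is not an I-cycle. By the necessity direction of Theorem \ref{connected}, $R_6(G)$ is disconnected. Concretely, $M$ and $M\triangle E(C)$ differ in flow across a chosen cut segment separating $h_1$ from the exterior, so by Theorem \ref{main results} they lie in different components.

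The only point needing care is the convention that a hole boundary counts as a II-cycle. I do not expect a real obstacle beyond that.
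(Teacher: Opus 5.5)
Your proposal is correct and matches the paper's argument: the boundary of a hole of an elementary coronoid system is a nice II-cycle, so the necessity direction of Theorem \ref{connected} gives disconnectedness. The only minor difference is that you get niceness of the hole boundary directly from Lemma \ref{nice cycle}, whereas the paper cites Lemma \ref{elementary}; either lemma works.
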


\begin{figure}
\centering
\includegraphics[scale=0.8]{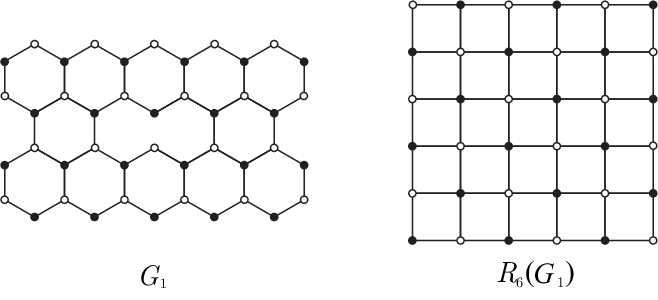}
\caption{\label{Figure_8}  A coronoid system with its resonance graph, a $5\times 5$ chessboard.}
\end{figure}

\begin{ex}
{\rm Let $G_1$ be \textcolor{black}{a} single coronoid system in  Fig. \ref{Figure_8}. Then four vertical edges of $G_1$ in the central level are forbidden edges. So each nice cycle of $G_1$ is an I-cycle and $R_6(G_1)$ is connected. In fact  $R_{6}(G_1)=P_6\Box P_6$, a $5\times 5$ chessboard.}
\end{ex}

\begin{figure}
\centering
\includegraphics[scale=0.8]{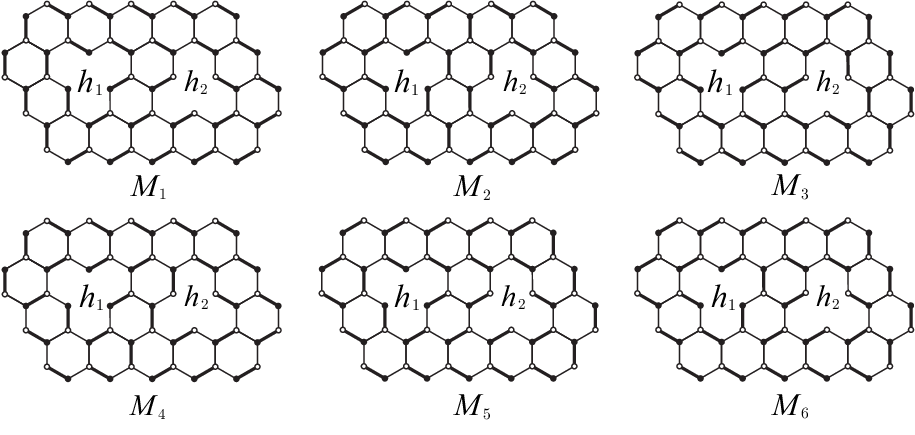}
\caption{\label{Figure_9} Six perfect matchings which  come from the  different components of  $R_6(G_{2})$.}
\end{figure}

\begin{ex}
{\rm Let $G_2$ be a coronoid system with $2$ holes in Fig. \ref{Figure_9}.  Since the boundaries of infinite face and $h_{1}$ are $M_{1}$-alternating cycles and the boundary of $h_{2}$ is $M_{4}$-alternating cycle, Lemmas \ref{nice cycle} and \ref{elementary} imply that $G_{2}$ is elementary. Then
 its resonance graph $R_{6}(G_2)$ is disconnected. With the computer, we obtain that $R_{6}(G_2)$ contains $334$ vertices and consists of  $3$ singletons $M_{1}$, $M_{2}$ and $M_{3}$ and $3$ non-singleton components that  contain $M_{4}$, $M_{5}$, $M_{6}$, respectively.}
\end{ex}

\subsection{Nanotubes}

We consider open-ended single-walled \emph{nanotubes} (nanotubes for short). A nanotube is a part of hexagonal tessellation of a cylinder, which can be formed by rolling up a two-dimensional hexagonal sheet.  More precisely, we define a nanotube as follows.
\begin{figure}[!htbp]
\centering
\includegraphics[scale=0.7]{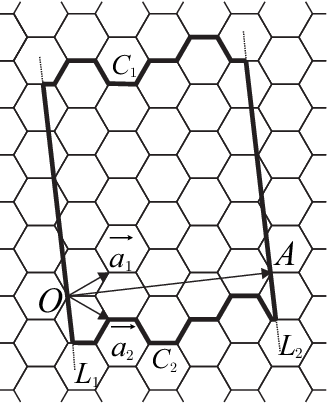}
\caption{\label{Figure_10}  Illustration of a $(3,2)$-type nanotube.}
\end{figure}
Choose any lattice point in the plane hexagonal lattice as the origin $O$. Let $\vec{a_{1}}$ and $\vec{a_{2}}$ be the two basic lattice vectors (see Fig. \ref{Figure_10}). Choose a vector
$\overrightarrow{OA}=n \vec{a_{1}}+m\vec{a_{2}}$ such that $n$ and $m$ are two integers and at least one of them is not zero. Draw two straight lines $L_1$ and $L_2$ passing through $O$ and $A$ perpendicular to $\overrightarrow{OA}$, respectively. By rolling up the hexagonal strip between $L_1$ and $L_2$ and gluing $L_1$ and $L_2$ such that $A$ and $O$ superimpose, we can obtain a hexagonal tessellation $H$ of the cylinder. $L_1$ or $L_2$ indicate the direction of the axis of the cylinder. Using the terminology of graph theory, a nanotube $N$ is defined to be the finite graph formed  by all the hexagons of $H$  between  two disjoint cycles $C_1$ and $C_2$ (length  at least is 4) of $H$ encircling the axis, called  an $(n,m)$-type nanotube. The cycles $C_1$ and $C_2$ are two open-ends of $N$.

Tratnik et al. investigated the resonance graph of a nanotube as a natural extension of the resonance graph of a coronoid  system, where only the two open-ends (corresponding to hole and the exterior face) are not allowed to flip even if an end of a nanotube is a cycle of length 6.  They proposed the following conjecture.

\begin{conj}(\cite{TZra15}){\label{conj}}
The resonance graph  $R_6(N)$ of a matchable nanotube $N$ is not connected.
\end{conj}


\begin{figure}
\centering
\includegraphics[scale=0.7]{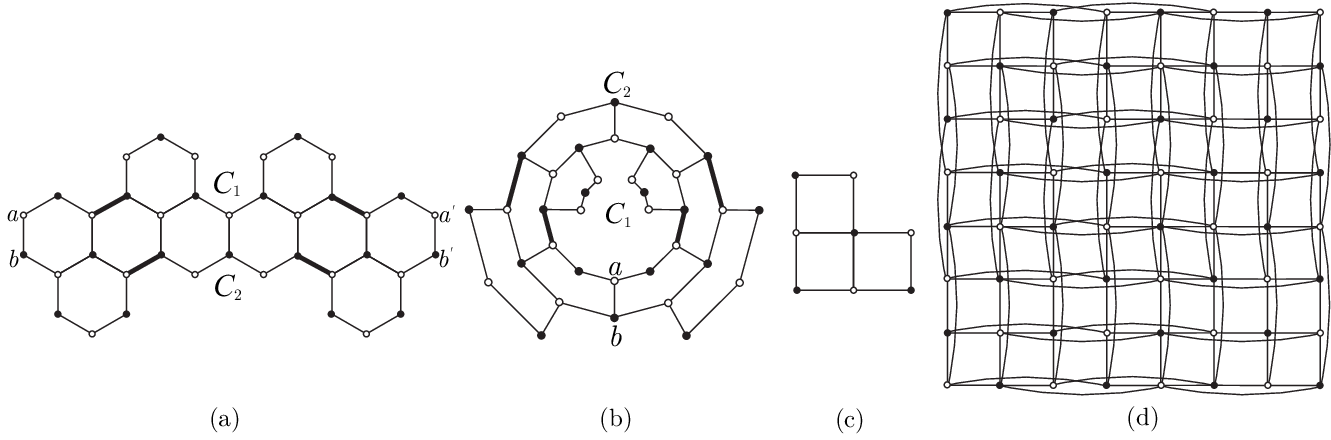}
\caption{(a) A nanotube $N_1$, (b) \textcolor{black}{a plane drawing of $N_{1}$}, (c) the resonance graph $R_6(H_1)$, (d) the resonance graph $R_6(N_1)$.\label{Figure_11}}
\end{figure}

However, we give an example of a nanotube whose resonance graph is connected, which shows that Conjecture \ref{conj} is not true.
\begin{ex}
{\rm Let $N_{1}$ be a nanotube obtained from the hexagonal system shown in Fig. \ref{Figure_11} by gluing edges $ab$ and  $a'b'$. Then $N_{1}$ has four forbidden edges (thick edges).  We can obtain two isomorphic hexagonal chains of 4 hexagons, say $H_{1}$ and $H_{2}$, from $N_{1}$ by the removal of the four forbidden edges. The $H_{1}$ and $H_{2}$ have isomorphic resonance graphs. Since  the resonance graphs $R_{6}(H_{1})$ and $R_{6}(H_{2})$ are connected, by Lemma \ref{component}, $R_{6}(N_{1})\cong R_{6}(H_{1})\square R_{6}(H_{2})$  is connected. In this way we can construct infinite many such examples.}
\end{ex}

\textcolor{black}{Note that a nanotube is also a planar bipartite graph \cite{SHZ96,TZra15}. Next we always draw a nanotube $N$ in the plane  so that one end corresponds to a hole and the other end to the exterior face and all the other faces are hexagons. For examples,  see Figs. \ref{Figure_11}(b) and \ref{Figure_12}(b).}  However,  a nanotube drawn in the plane cannot be a coronoid system.

Theorem 2.3 in  \cite{ZZYh04} implies the following result, which shows that Conjecture \ref{conj} is true for elementary nanotubes.
\begin{lem}\label{elem-N} Let $N$ be an elementary nanotube. Then $R_6(N)$ is disconnected.
\end{lem}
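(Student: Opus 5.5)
The plan is to find, for an elementary nanotube $N$ drawn in the plane as described (one open end $C_1$ bounding the hole, the other end $C_2$ bounding the exterior face, all other faces hexagons), a perfect matching $M$ such that $C_2$ is $M$-alternating. We then flip $C_2$ to obtain $M'=M\triangle E(C_2)$ and show that $M$ and $M'$ cannot be joined by hexagon flips. The tool for the second part is the flow invariant from Section 3, which only uses planarity, bipartiteness and the fact that flips occur along hexagons. It therefore transfers verbatim to the plane drawing of $N$.

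\textbf{Step 1: an alternating end.} Since $N$ is a plane elementary bipartite graph, Lemma \ref{nice cycle} gives that every face of $N$ is resonant. In particular the exterior face is resonant, so there is a perfect matching $M$ with $C_2$ $M$-alternating. Then $M':=M\triangle E(C_2)$ is again a perfect matching and $M\triangle M'=E(C_2)$.

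\textbf{Step 2: the invariant.} Take a cut segment $L$ in the dual of the plane drawing, running from the vertex of the hole $C_1$ through hexagon-vertices to the vertex of the exterior face. Such a path exists because the dual is connected. This $L$ crosses $C_2$ and joins the interior of $C_2$ (where the hole lies) to its exterior.
\begin{itemize}
\item The proof of Lemma \ref{one II cycle} is purely a parity and alternation count along a cycle, so it applies here. It gives $|{\rm flow}_L(C_2,M)-{\rm flow}_L(C_2,M')|=1$.
\item Off $C_2$ the two matchings agree, so ${\rm flow}_L(N,M)\neq{\rm flow}_L(N,M')$, exactly as in Lemma \ref{EC}.
\item A hexagon flip changes the matching along a hexagon $S$ whose interior contains neither endpoint of $L$. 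By Lemma \ref{one II cycle}, and as in Corollary \ref{Necessity}, it preserves ${\rm flow}_L(N,\cdot)$.
\end{itemize}
Hence $M$ and $M'$ lie in different components, and $R_6(N)$ is disconnected.

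\textbf{Expected obstacle.} The delicate point is justifying that the Section 3 lemmas, stated for coronoid systems, hold for the plane drawing of $N$, which is not a coronoid system. I will check that their proofs use only that the drawing is plane and bipartite, that faces other than the two ends are hexagons, and that hexagons contain no end face in their interior. Alternatively, one could cite Theorem 2.3 of \cite{ZZYh04}: for a plane elementary bipartite graph, a restricted resonance graph $R_F$ with $F$ omitting a resonant interior face (here the hole $C_1$) is disconnected. This follows the same flow reasoning with $C_1$ in place of $C_2$.
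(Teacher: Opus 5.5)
Your proof is correct, but it takes a different route from the paper. The paper gives no argument of its own. It simply invokes Theorem 2.3 of \cite{ZZYh04}, a general result on restricted resonance graphs $R_F(G)$ of plane elementary bipartite graphs, applied with $F$ the set of hexagonal faces. That $F$ omits both the hole and the exterior face.

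You instead build an explicit separating invariant. Lemma \ref{nice cycle} makes the end $C_2$ resonant. The flow across a hole-to-exterior cut segment $L$ then distinguishes $M$ from $M\triangle E(C_2)$, and every hexagon flip preserves it.

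The obstacle you anticipate is milder than you fear:
\begin{itemize}
\item Every edge of $C_2$ borders the exterior face, and $h_0^*$ is an endpoint of $L$. So only the last dual edge of $L$ crosses $C_2$, and the $\pm 1$ jump is immediate without Lemma \ref{one II cycle}.
\item $L$ crosses a hexagonal face either zero or two times, and both endpoints of $L$ lie outside that face. So invariance under a flip is just the vanishing of the signed crossing number of a closed curve with a path whose ends lie on the same side of it. This holds in any plane bipartite drawing.
\end{itemize}

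The citation buys brevity. Your route buys three things:
\begin{itemize}
\item It is self-contained within the paper's flow framework.
\item It exhibits an explicit pair of matchings in different components.
\item It proves a stronger statement. You only use that $C_2$ is a nice II-cycle, so the same argument shows $R_6(N)$ is disconnected for any matchable nanotube with a nice II-cycle. This gives one direction of Theorem \ref{N-connected} directly, without passing through elementary components, and it is the necessity half of Theorem \ref{N-flow}.
\end{itemize}

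One small correction to your closing remark. Omitting a single resonant interior face from $F$ does not by itself force disconnection if the exterior face may belong to $F$: re-embed with that face outside, and you recover an ordinary resonance graph, which is connected. What matters here is that both ends are excluded.
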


A cycle of a nanotube $N$ is also an I-cycle or II-cycle, depending on whether its interior contains the hole.
Next we will give a simple criterion for a nanotube  to have  connected resonance graph by using resonance graph of a plane bipartite graph (see Lemma \ref{component}).
\begin{thm}{\label{N-connected}}
Let $N$ be a matchable nanotube. Then $R_{6}(N)$ is connected if and only if each nice cycle of $N$ is an I-cycle or bounds a hexagonal system.
\end{thm}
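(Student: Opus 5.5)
The plan is to imitate the proof of Theorem \ref{connected}, after first checking which parts of the coronoid machinery carry over to nanotubes. Draw $N$ in the plane so that one open end is the hole $h_1$ and the other is the exterior face $h_0$. Then there is a single chosen cut segment $L$, a path in $N^*$ from $h_0^*$ to $h_1^*$ through hexagons. Lemmas \ref{one II cycle} and \ref{EC} and Corollary \ref{Necessity} use only planarity, bipartiteness and the fact that every flipped face is a hexagon not containing the hole, so they apply verbatim to $N$. I interpret ``bounds a hexagonal system'' as: the cycle $C$, viewed on the cylinder, is contractible and the disc it bounds consists only of hexagons of the tessellation. Since the ends are not flippable even when they are hexagons, such a disc must not contain an end. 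The step I am least sure of is exactly how this phrase is meant when an end has length 6, so I fix this interpretation first and check it against the examples (Fig.~\ref{Figure_11}).

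\emph{Necessity.} Suppose $N$ has a nice cycle $C$ that is a II-cycle and does not bound a hexagonal system in the above sense. Take a perfect matching $M_1$ for which $C$ is $M_1$-alternating, and put $M_2=M_1\triangle E(C)$. The hole lies inside $C$ and $h_0$ lies outside, so $L$ has one endpoint in the interior of $C$ and one in the exterior. Lemma \ref{one II cycle} then gives ${\rm flow}_L(C,M_1)\neq{\rm flow}_L(C,M_2)$. Since $M_1\triangle M_2=E(C)$, Lemma \ref{EC} gives ${\rm flow}_L(N,M_1)\ne{\rm flow}_L(N,M_2)$. On the other hand, each hexagon flip is an I-cycle, so the argument of Corollary \ref{Necessity} shows that flow across $L$ is constant on components of $R_6(N)$. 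Hence $M_1$ and $M_2$ lie in different components, and $R_6(N)$ is disconnected.

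\emph{Sufficiency.} Let $M,M'$ be perfect matchings and write $M\triangle M'=C_1\cup\dots\cup C_k$. I argue by induction on $k$, exactly as in the first part of the proof of Lemma \ref{Sufficiency}. By hypothesis each $C_i$ is a nice cycle whose interior on the cylinder is a disc made of hexagons. The key point, and the main obstacle, is to justify that $I[C_i]$ really is a (plane) hexagonal system. For this I would lift the disc to the universal cover of the cylinder, which is the plane hexagonal lattice. A contractible cycle lifts to a closed cycle there, and the disc it bounds maps isomorphically onto its image, so $I[C_i]$ is 2-connected with all interior faces regular hexagons. Since $C_i$ is $M$-alternating, $M|_{I[C_i]}$ and $(M\triangle E(C_i))|_{I[C_i]}$ are perfect matchings of $I[C_i]$. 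Lemma \ref{hexagonal system} then connects them by hexagon flips inside $I[C_i]$. Extending each matching by $M$ outside $I[C_i]$ gives a path in $R_6(N)$ from $M$ to $M\triangle E(C_i)$, which reduces $k$ by one. Induction finishes the proof. As a cross-check I would also run the same argument through Lemma \ref{component} on the elementary components, as in Example~\ref{Figure_11}.
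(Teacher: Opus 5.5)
Your proof is correct, but it takes a different route from the paper. Your reading of ``bounds a hexagonal system'' (a contractible cycle whose disc contains no end) is also the intended one: the paper's own proof ends with ``equivalently, $N$ has no nice II-cycle.''

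\textbf{The paper's route.} It argues structurally. It decomposes $N$ into its elementary components $G_1,\dots,G_k$ and shows each is $K_2$, a hexagonal system, or an elementary sub-nanotube. Other non-hexagonal interior faces are ruled out via Lemmas~\ref{nice cycle} and~\ref{elementary}. It then applies Lemma~\ref{component} to get $R_6(N)\cong R_6(G_1)\square\cdots\square R_6(G_k)$. The hexagonal-system factors are connected by Lemma~\ref{hexagonal system}, and the nanotube factors are disconnected by the external result Lemma~\ref{elem-N}.

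\textbf{Your route.} You transplant the proof of Theorem~\ref{connected}. Necessity comes from the flow invariant (Lemmas~\ref{one II cycle}, \ref{EC} and Corollary~\ref{Necessity}). Sufficiency comes from peeling off the cycles of $M\triangle M'$ one at a time, exactly as in the I-cycle step of Lemma~\ref{Sufficiency}.

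\textbf{What your route buys.}
\begin{itemize}
\item It is self-contained: it needs neither Lemma~\ref{elem-N} nor the classification of elementary components.
\item It gives an explicit certificate of disconnection: $M_1$ and $M_1\triangle E(C)$ have different flows across $L$.
\item Your lift to the universal cover justifies that $I[C]$ is a hexagonal system, which the paper only asserts.
\item You rightly avoid invoking the full flow criterion for nanotubes. Its sufficiency fails in the non-elementary case (Fig.~\ref{Figure_12}), whereas the I-cycle peeling step needs no elementarity.
\end{itemize}

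\textbf{What the paper's route buys.} It yields the product decomposition of $R_6(N)$. This pinpoints which elementary components cause disconnection, and it is the mechanism behind the counterexample $N_1$ to Conjecture~\ref{conj}.
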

\begin{proof} Let $G_1,G_2,\ldots, G_k$ denote the elementary components of $N$, $k\geq 1$.  Since each $G_i$ is a plane elementary bipartite graph, $G_i$ is a $K_2$ (a complete graph with two vertices) or is 2-connected. For the former, $R_6(G_i)$ is a singleton. For the latter, $G_i$ is either a hexagonal system or a subnanotube on $N$ (its ends may intersect). If $G_i$ is not a hexagonal system, then it has an interior face containing the hole of $N$ and the other interior faces are hexagonal faces on $N$; Otherwise, let $C$ be the boundary of a non-hexagonal interior face  $f$ that does not contain the hole of $N$. Then $I[C]$ in $N$ is a hexagonal system. By Lemma \ref{nice cycle} $C$ is a nice cycle of $G_i$, $I[C]$ and  $N$. By Lemma \ref{elementary}  $I[C]$ is an elementary hexagonal system, which contains no forbidden edges, a contradiction. Hence by Lemma \ref{component}, we have that  $R_6(N)\cong R_6(G_1)\square \cdots \square R_6(G_k)$. If $G_i$ is a hexagonal system, then $R_6(G_{i})$ is connected. If $G_i$ is an elementary nanotube, then $R_6(G_i)$ is disconnected by Lemma \ref{elem-N}.  In this case $G_i$ contains a nice II-cycle of $N$. So $R_6(N)$ is connected if and only if each $G_i$ is $K_2$ or a hexagonal system, equivalently, $N$ has no nice II-cycle.
\end{proof}
\begin{rem}
{\rm  Each 2-connected elementary component of a matchable nanotube is either a hexagonal  system or a degenerated nanotube (two ends may have a common part).}
\end{rem}
So we now wonder whether two perfect matchings of a nanotube $N$ lie in the same component of its resonance graph.

\begin{color}{black}
For a nanotube $N$,  a \emph{cut segment} $L$ is a  path in dual graph $N^*$ with a direction between the vertices corresponding two open ends. The flow of a perfect matching $M$ across $L$ are defined similarly as coronoid system. For example, see \ref{Figure_12}(b) and (d).


\end{color}

We find that the necessity of Theorem \ref{main results} holds for nanotubes. However the sufficiency no longer holds; \textcolor{black}{see} the following example.
\begin{figure}
\centering
\includegraphics[scale=0.7]{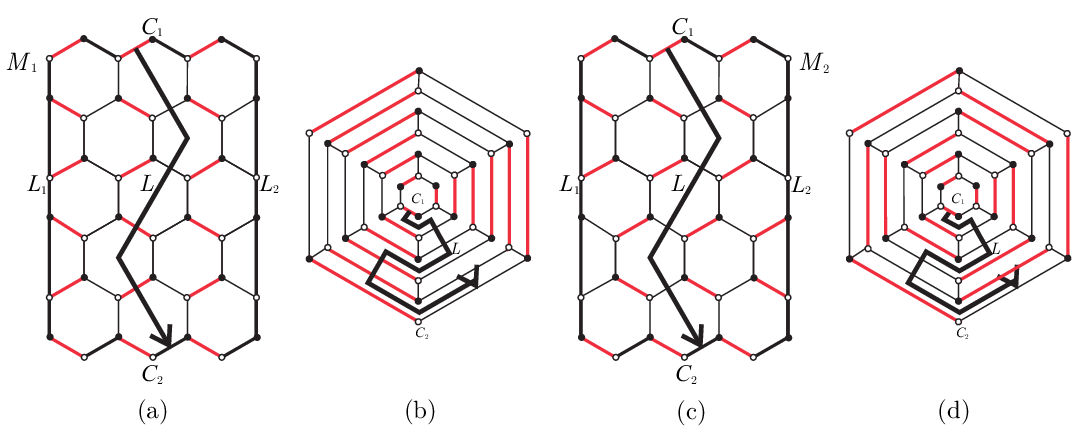}
\caption{\label{Figure_12} An example of non-elementary nanotube.}
\end{figure}
\begin{ex}
{\rm Let $N$ be a (3,0)-type nanotube as shown in Fig. \ref{Figure_12}.  Let $M_{1}$ and $M_{2}$ be two distinct perfect matchings of $N$ and $L$ be a cut segment. Then all vertical edges of $N$ are forbidden edges. So $N$ is not elementary. It is easy to see that ${\rm flow}_{L}(N, M_{1})=-1={\rm flow}_{L}(N,M_{2})$, but $M_{1}$ and $M_{2}$ are isolated vertices of $R_{6}(N)$.}
\end{ex}
For an elementary nanotube, the following result shows that Theorem \ref{main results} also holds.

\begin{thm}
   {\label{N-flow}}
Let $N$ be an elementary nanotube. 
Two perfect matchings lie in the same connected component of the resonance graph of $N$ if and only if their flows across any cut segment $L$ are equal.
\end{thm}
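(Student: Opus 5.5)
We follow the proof of Theorem \ref{main results}, with the nanotube $N$ drawn in the plane so that one open end is the hole $h_1$ and the other is the exterior face $h_0$. There is then only one hole, and a cut segment $L$ is a directed dual path from $h_0^*$ to $h_1^*$ (or the reverse, which is handled by Lemma \ref{anti-oriented}).

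For necessity, note that Lemmas \ref{one II cycle} and \ref{EC} only use planarity and bipartiteness together with the positions of the endpoints of $L$ relative to a cycle. They therefore hold verbatim for the plane drawing of $N$. Each flip along a hexagon changes the matching by an I-cycle, so the argument of Corollary \ref{Necessity} shows that the flow across every cut segment is invariant along a path in $R_6(N)$. This part does not need elementarity.

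For sufficiency, fix one cut segment $L$ and suppose ${\rm flow}_L(N,M_1)={\rm flow}_L(N,M_2)$. We induct on the number $k$ of cycles in $\mathcal{C}=M_1\triangle M_2$.

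\emph{The case of an I-cycle.} If $\mathcal{C}$ contains an I-cycle $C$, then $I[C]$ in $N$ is a hexagonal system. By Lemma \ref{hexagonal system} and the argument at the start of Lemma \ref{Sufficiency}, we can move $M_1$ to $M_1\triangle E(C)$ inside $R_6(N)$. By necessity the flow is preserved, and we apply induction.

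\emph{The case of only II-cycles.} Otherwise all cycles of $\mathcal{C}$ are II-cycles. They all encircle the single hole and so are nested, say $C_1\supset C_2\supset\cdots\supset C_s$ with $C_1$ outermost. Since every one of them separates the two endpoints of $L$, Lemma \ref{one II cycle} and the equality of flows give, exactly as in Claim \ref{balance}, that $s$ is even. Lemma \ref{two cycle} then gives that half of the cycles are proper $M_1$-alternating and half are improper. Hence there are two consecutive nested cycles $C_j, C_{j+1}$ of opposite types. Taking the outermost such pair, we may assume $C_j$ is proper and $C_{j+1}$ improper (the reversed case is symmetric, swapping the roles of $M_1$ and $M_2$).

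Let $G_1$ be the annular region between $C_j$ and $C_{j+1}$. It contains no hole, so all of its interior faces are hexagons of $N$. This is the key point where elementarity enters. $C_j$ is a nice cycle of the elementary plane bipartite graph $N$, so by Lemma \ref{I[C] elementary} $I[C_j]$ is elementary. Also $C_{j+1}$ is $M_1$-alternating and hence nice in $I[C_j]$, so applying Lemma \ref{I[C] elementary} again inside $I[C_j]$ shows that $O[C_{j+1}]\cap I[C_j]=G_1$ is elementary. Now Lemma \ref{path}, applied to $G_1$ with outer boundary $C_j$ proper and inner boundary $C_{j+1}$ improper, yields a path of hexagon flips inside $G_1$. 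This path goes from $M_1|_{G_1}$ to $(M_1\triangle E(C_j\cup C_{j+1}))|_{G_1}$. Extending by $M_1$ outside $G_1$ gives a path in $R_6(N)$ from $M_1$ to $M_1'=M_1\triangle E(C_j\cup C_{j+1})$. By necessity $M_1'$ has the same flow, and $M_1'\triangle M_2$ has $k-2$ cycles, so induction finishes the argument.

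\emph{The main obstacle.} The delicate step is guaranteeing that the annulus $G_1$ is elementary, which is exactly what fails in Fig.~\ref{Figure_12}. This requires care because in the nanotube the region between the cycles is not literally a coronoid system, so Lemmas \ref{equivalent} and \ref{elementary} cannot be quoted directly. The plan is to get elementarity via the twofold application of Lemma \ref{I[C] elementary} to the planar drawing, as above. One must also check that Lemma \ref{path} only flips interior faces of $G_1$, which are hexagons; this holds because the hole lies inside $C_{j+1}$ and the exterior face lies outside $C_j$.
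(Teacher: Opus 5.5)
Your proposal is correct and follows essentially the same route as the paper. Necessity comes from Lemmas~\ref{one II cycle}--\ref{EC} and Corollary~\ref{Necessity} in the plane drawing. Sufficiency is by induction on the number of cycles of $\mathcal{C}$: Lemma~\ref{two cycle} gives equally many proper and improper II-cycles, two applications of Lemma~\ref{I[C] elementary} make the annulus between a proper and an improper cycle elementary, and Lemma~\ref{path} removes that pair. The only difference is cosmetic: you insist on a consecutive nested pair of opposite types, whereas the paper takes an arbitrary proper and an arbitrary improper cycle of $\mathcal{C}$, which also works because Lemma~\ref{path} is unaffected by other cycles of $\mathcal{C}$ lying in between.
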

\begin{proof}   \textcolor{black}{It is easy to check that Lemmas \ref{one II cycle} to \ref{EC} and Corollary \ref{Necessity}} also hold for a cut segment $L$ of a matchable nanotube $N$.
Thus, the necessity holds. We can outline a proof to the sufficiency by an analogous way to Lemma \ref{Sufficiency}.   For  perfect matchings  $M_{1}$ and $M_{2}$ of $N$, suppose ${\rm flow}_{L}(N,M_{1})= {\rm flow}_{L}(N,M_{2})$ for  cut segment $L$. Let $\mathcal C=M_1\triangle M_2$, which consists of disjoint $(M_1,M_2)$-alternating cycles. We now show that $R_{6}(N)$ has a path between $M_{1}$ and $M_{2}$ by induction on the number of cycles in $\mathcal{C}$. Without loss of generality, suppose $\mathcal C$ has no I-cycles. Lemma \ref{EC} implies that ${\rm flow}_{L}(\mathcal{C}, M_{1})={\rm flow}_{L}(\mathcal{C},M_{2})$.  \textcolor{black}{By Lemma \ref{two cycle}},
we have a simpler way to show that  $M_{1}$ and $M_{2}$ are balanced around the hole of $N$.  Take a proper $M_1$-alternating cycle $C_1$ and an improper $M_1$-alternating cycle $C_2$ in $\mathcal C$. We use $N[C_1, C_2]$ to denote subnanotube with two open ends $C_1$ and $C_2$ in $N$.
Let $M_1^*=M_1\triangle E(C_1)\triangle E(C_2)$, $M_{1}':= M_1|_{N[C_{1},C_{2}]}$ and $M_{1}'':= M_1^*|_{N[C_{1},C_{2}]}$. Then $M_1'$ and $M_1''$ are two perfect matchings of $N[C_{1},C_{2}]$ and $M_1'\triangle M_1''=C_{1}\cup C_{2}$.  \textcolor{black}{Note that $N[C_{1},C_{2}]$ is also plane bipartite graph. Without loss of generality, let $I[C_{2}]\subset I[C_{1}]$. Clearly, $C_{1}$ is a nice cycle of $N$. Since $N$ is plane elementary bipartite, by Lemma \ref{I[C] elementary}, $I[C_{1}]$ of $N$ is a plane elementary bipartite graph. Note that $C_{2}$ is a nice cycle of $I[C_{1}]$. By Lemma \ref{I[C] elementary} again,  $O[C_{2}]$ of $I[C_{1}]$ is elementary, that is, the graph  $N[C_{1},C_{2}]$ obtained from $I[C_{1}]$ deleting the interior of $C_{2}$ is plane elementary bipartite graph } (in a non-elementary case, this result does not hold; for example, see Fig. \ref{Figure_12}).
\textcolor{black}{By Lemma \ref{path}, $R_6(N[C_{1},C_{2}])$ has a path between $M_{1}'$ and  $M_{1}''$.}  
By the same manner as in the proof of Lemma \ref{Sufficiency},  $R_{6}(N)$ has a path between  $M_{1}$ and  $M_{1}^{*}$. \textcolor{black}{Since}  $M_1^*\triangle M_2=\mathcal C\setminus \{C_1,C_2\}$,
by the induction hypothesis,  $R_6(N)$ has a path from $M_1^*$ to $M_2$. Hence $R_6(N)$ has a path from $M_1$ to $M_2$.
\end{proof}


\vspace{5pt}
\noindent{\textcolor{black}{\textbf{Acknowledgments}}}

\textcolor{black}{The authors would like to thank the two anonymous referees for their careful reading to the manuscript and their valuable comments and suggestions in improving this manuscript.}

\end{document}